\let\footnote=\endnote
\long\def\@endnotetext#1{%
\if@enotesopen \else \@openenotes \fi
\immediate\write\@enotes{\@doanenote{page \thepage}}
\begingroup
\def\next{#1}
\newlinechar='40
\immediate\write\@enotes{\meaning\next}%
\endgroup
\immediate\write\@enotes{\@endanenote}}
\newtheorem{thm}{Theorem}
\newtheorem{defn}[thm]{Definition}
\newtheorem{prop}[thm]{Proposition}
\newtheorem{cor}[thm]{Corollary}
\newtheorem{lem}[thm]{Lemma}
\newtheorem{rem}[thm]{Remark}
\newtheorem{exe}[thm]{Example}
\newtheorem*{thm*}{Theorem}
\newtheorem*{hyp*}{Hypothesis}
\newtheorem*{defn*}{Definition}
\newtheorem*{prop*}{Proposition}
\newtheorem*{cor*}{Corollary}
\newtheorem*{lem*}{Lemma}
\newtheorem*{rem*}{Remark}
\newtheorem*{exe*}{Example}
\newtheorem*{todo*}{Idea(s) to check}
\DeclareMathOperator*{\argmin}{arg\,min}
\DeclareMathOperator*{\argmax}{arg\,max}
\let\refOld\ref
\renewcommand{\ref}[1]{(\refOld{#1})}
\renewcommand{\phi}{\varphi}
\newcommand{\eps}{\varepsilon}
\newcommand{\supp}{\textrm{Supp }}
\newcommand{\newcomment}[1]{}
\newcommand{\verif}[1]{}
\newcommand{\Haus}{\textnormal{Haus}}
\renewcommand{\labelenumi}{\textnormal{(H\arabic{enumi})}}
\newcounter{compteur} % Creation du compteur 
\title{Mass localization}
\author{Thibaut Le Gouic}
\begin{document}

%%%%%%%%%%%% DRAFT %%%%%%%%%%%%%%%%

\maketitle

\begin{abstract}
For a given class $\mathcal{F}$ of closed sets of a measured metric space $(E,d,\mu)$, we want to find the smallest element $B$ of the class $\mathcal{F}$ such that $\mu(B)\geq 1-\alpha$, for a given $0<\alpha<1$. This set $B$ \textit{localizes the mass} of $\mu$. Replacing the measure $\mu$ by the empirical measure $\mu_n$ gives an empirical smallest set $B_n$. The article introduces a formal definition of small sets (and their size) and study the convergence of the sets $B_n$ to $B$ and of their size.
\end{abstract}
\tableofcontents

\section{Introduction}\label{trim:presentation}

%Mass localization of a probability measure $\mu$ intends to remove a proportion $\alpha$ of the support of the measure $\mu$, in order to obtain a new measure $\alpha$\textit{-localized}. A formal definition is thus, conditioning a measure to a Borel set with probability $1- \alpha$.
The framework of our study is a measured metric space $(E,d,\mu)$. Mass localization intends to find in this setting a small Borel set $B$ such that $\mu(B)\geq 1-\alpha$ for some given $0<\alpha<1$.
The measure $\mu$ conditioned on $B$ is a new measure that we say to be $\alpha$-localized and denote $\mu_\alpha$.
This article provides a definition of a \textit{smallest} Borel set of probability $1- \alpha$ in order to obtain a localized version of the measure with the  \textit{smallest} support possible. 

This \textit{smallest} Borel set represents intuitively the "essential part" of the measure. However, it seems difficult to give an universal definition of "\textit{smallest}": although a ball centered on the origin as the smallest set with standard Gaussian measure on $\mathbb{R}^d$ seems a good choice, it is not obvious to define such set if the measure is not unimodal or if it is not symmetric or if it is not even defined on an Euclidian space.

Consistency is an important property we want for our notion. In statistics, the measure $\mu$ often unknown, is usually approximated by a sequence of probability measures $(\mu_n)_{n\geq 1}$. The \textit{smallest} closed set with $\mu_n$ -probability $1-\alpha$ should become closer to the \textit{smallest} one of $\mu$-probability $1-\alpha$ as $n$ grows.

Several methods have been studied in order to define such sets.

A first method is to choose a class $\mathcal{F}$ of subsets of $E$ partially ordered by their volume and to pick the smallest set (for this order) of this class with a $\mu$-probability greater than $1-\alpha$. This set corresponds to the level sets of a density function $f$ whenever $\mu$ is absolutely continuous with respect to the Lebesgue measure and the class $\mathcal{F}$ contains the level sets. An other way to define this set is to maximize
\begin{equation}\label{excess_mass}
\mu(B)-\beta\lambda (B),
\end{equation}
over $B \in \mathcal{F}$, where $\lambda$ is the Lebesgue measure and $\mu(\{f\geq \beta\})=1-\alpha$. 
This notion is known as \textit{excess mass}.
Denote by $B_\beta$ the maximizer of (\ref{excess_mass}) and by $B_\beta^n$ the maximizer of
\[
\mu_n(B)-\beta\lambda (B),
\]
for $\mu_n$ the empirical measure.
It is then of interest to determine if $B_\beta^n$ converges to $B_\beta$ and to exhibit a rate of convergence in this case.

The article \cite{Hartigan1987} considers the case of $\mathcal{F}$ being the set of all convex sets of $\mathbb{R}^2$ and proves that the Hausdorff distance $d_H(B_n,B)$ between $B_\beta^n$ and $B_\beta$ converges to $0$ and satisfies
\[
d_H(B_\beta^n,B_\beta)=O\left(\frac{\log n}{n}\right)^{2/7}.
\]
The article \cite{Nolan1991} considers sets $\mathcal{F}$ as  the set of all ellipsoids. Consistency of $B_\beta^n$ is proven, as well as the following limit theorem. Let $c_n$ and $c$ be the centers of the ellipsoids $B_\beta^n$ and $B_\beta$ respectively, and let $\sigma_n$ and $\sigma$ be the vector containing the entries of the matrix defining the ellipsoids $B_\beta^n$ and $B_\beta$ respectively, then, if the level sets of the measure $\mu$ are ellipsoids,
\[
n^{1/3}(c_n-c,\sigma_n-\sigma)
\]
is weakly converging to the maximum of a Gaussian process.
\cite{Polonik1997} studies a more general case, with a different notion of convergence, and showed the consistency  of $B_\beta^n$ for the pseudo-distance
\[
d_\mu(F,G)=\mu(F\triangle G),
\]
where $\triangle$ denotes the symmetric difference, whenever the class $\mathcal{F}$ is a Glivenko-Cantelli class. Under several hypotheses including that the level sets of the measure $\mu$ belongs to $\mathcal{F}$ and regularity conditions on $\mu$, the article obtains the following rate of convergence
\[
d_\mu(B_\beta,B_\beta^n) = O(n^{-\delta}),
\]
for a constant $\delta$ depending on the regularity of $\mu$.
This excess mass approach leads to rather precise results in many cases. However, it comes with few drawbacks, such as the condition that $\mathcal{F}$ must contains the level sets of the unknown measure $\mu$, which requires a certain knowledge on the $\mu$. Requirements on the regularity of $\mu$ can also be unsatisfactory for some applications. Also, this approach is restricted to the case of spaces with finite dimension (and often $\mathbb{R}^d$).

%Une seconde méthode se base sur les processus de profondeur. Elle consiste à associer à chaque point de l'espace une \textit{profondeur} par une fonction (devant respecter certaines hypothèses) arbitrairement choisie à l'avance, puis à choisir l'ensemble des points les plus \textit{profonds}. Plusieurs notions de \textit{profondeur} ont été étudiées et donnent différents résultats. \cite{depth} définit dans un cadre générale la notion de profondeur.

A second method comes from the notion of trimming on $\mathbb{R}$ extended to $\mathbb{R}^d$. On $\mathbb{R}$, the smallest set of $\mu$-probability $1-\alpha$ is defined as
\[
[F^{-1}(\alpha/2);F^{-1}(1-\alpha/2)],
\]
where $F$ is the cumulative distribution function of $\mu$. Replacing $F$ by the empirical cumulative distribution function $F_n$ defines the empirical smallest set. Extension to $\mathbb{R}^d$ can be done in the following way: $C_\alpha$ denotes the intersection of all the closed half spaces of $\mu$-probability greater than $1-\alpha$. $C_\alpha$ is then a non-empty convex set for $\alpha<1/2$, if the measure $\mu$ is regular enough. \cite{Nolan92} deals with the rate of convergence of $C_n$, defined similarly with the empirical measure $\mu_n$ and shows its consistency. In order to quantify the rate of convergence of $C_n$ to $C_\alpha$, the article introduce the following random functions
\[
r_n(u)=\inf \{r\geq 0;ru \notin C_n\},
\]
and
\[
r_\alpha(u)=\inf \{r\geq 0;ru \notin C_\alpha\},
\]
and establishes the weak convergence to a Gaussian process defined on the unit sphere $\mathbb{S}^{d-1}$ of the process
\[
\sqrt{n}(r_n-r_\alpha),
\]
under regularity conditions on the density function of $\mu$.

The article \cite{kmeans} presents another method, called $\alpha$-trimmed $k$-means, which introduces very few arbitrary parameters. This method chooses the support of the $\alpha$-localized measure $\nu$ as the one minimizing the distortion to its best $k$-quantifier. Formally, for a given function $\Phi$, and a given integer $k$, the method consists in choosing
\[
B_\alpha \in \argmin \left\{\inf_{\{m_1,...,m_k\}\subset \mathbb{R}^d}\int_B \Phi \left(\inf_{1\leq i \leq k} \|X - m_i\| \right) d\mu; \mu(B)\geq 1-\alpha\right\}.
\]
After proving the existence of such minimizer, the article \cite{kmeans} shows the consistency of $B_\alpha$: if $(\mu_n)_{n\geq 1}$ weakly converges to an absolutely continuous measure $\mu$ then, for any choice of 
\[
B_\alpha^n \in \argmin \left\{\inf_{\{m_1,...,m_k\}\subset \mathbb{R}^d}\int_B \Phi \left(\inf_{1\leq i \leq k} \|X - m_i\| \right) d\mu_n; \mu_n(B)\geq 1-\alpha\right\},
\]
the sequence $(B_\alpha^n)_{n\geq 1}$ converges to $B_\alpha$ (when unique) for the Hausdorff metric. Theses results hold on $\mathbb{R}^d$.

The main goal of our article is to provide a new definition that is intuitive and avoid usual hypotheses, that remains consistent.

\subsection{Definitions}

We define a notion of \textit{smallest} closed set and introduce some properties that will help to understand its meaning.
The framework of the definition aims to be fairly general.
$(E,d)$ is a Polish space (metric, separable and complete space) and $\mu$ is a Borel measure on $(E,d)$.
A \textit{smallest} set will be defined as the minimizer of a function $\tau$ defined on a class $\mathcal{F}$ of closed subsets of $E$.

\subsubsection{Stable set}

In order to ensure the existence of the smallest set in a class $\mathcal{F}$ of sets, the class needs to be stable in some way. The following definition of such stability will be an assumption made on the class. Let first set the following notation.

For a given set $B$ and $\eps>0$, the set $B^\eps$ is the $\eps$\textbf{-neighborhood} of $B$:
\[
B^\eps:=\{x\in E; \exists y \in B, d(x,y)< \eps\}.
\]

\begin{defn}[Stable set]\label{def:ensemblestable}
Let $(B_n)_{n \geq 1}$ be a sequence of closed sets, denote $\lim_n B_n$ the set
\[
\lim_n B_n := \bigcap_{\eps > 0} \bigcup_{k\geq 1} \bigcap_{n \geq k} B_n^\eps.
\]
Let $\mathcal{F}$ be a class of closed sets of $E$. $\mathcal{F}$ is \textbf{stable} if $E\in \mathcal{F}$ and
\[
(B_n)_{n \geq 1} \subset \mathcal{F} \implies \exists (n_k)_{k\geq 1}, n_k \rightarrow \infty, \lim_k B_{n_k} \in \mathcal{F}.\\
\]
\end{defn}

This notion of stability is close to the completeness under Hausdorff convergence. Indeed, it is strictly equivalent if the metric space $(E,d)$ is compact, as it will be discussed in the next remarks. 

As we defined $\mathcal{F}$ as a subset of the closed sets of $(E,d)$, we first check that our notion of stability makes sense for a class of closed sets.

\begin{rem}\label{rem:lim_ferme}
Given a sequence of sets $(B_n)_{n\geq 1}$, $\lim_n B_n$ is always closed.
Indeed, denote $B(x,\eps/2)$ the ball centered in $x$ of radius $\eps/2$,
\begin{align}
x \notin \lim_n B_n &\Leftrightarrow \exists \eps>0, \forall k\geq 1, \exists n\geq k, x \notin B_n^\eps\\
&\Rightarrow \exists \eps>0, \forall k\geq 1, \exists n\geq k, B(x,\eps/2) \cap B_n^{\eps/2} = \emptyset\\
& \Rightarrow \exists \eps>0, B(x,\eps/2) \cap \lim_nB_n = \emptyset.
\end{align}
In other words, $(\lim_nB_n )^c$ is open, and $\lim_nB_n$ is thus closed.
\end{rem}

The following
% two 
remark aims to clarify stability.

\begin{rem}\label{rem:generalizedH}
When $(B_n)_{n\geq 1}$ is converging to $B_\infty$ for the Hausdorff metric, then
\[
\lim_n B_n = B_\infty.
\]
Indeed, denote $\eps_k$ the smallest $\eps>0$ such that $B_n \subset B^\eps_\infty$ and $B_\infty \subset B^\eps_n$ for all $n \geq k$, then,
\[
B_\infty = \bigcap_{\eps > 0} \bigcup_{k\geq 1} \bigcap_{n \geq k} B_\infty \subset \bigcap_{\eps > 0} \bigcup_{k\geq 1} \bigcap_{n \geq k} B_n^{\eps_k} = \lim_n B_n \subset \bigcap_{\eps > 0} \bigcup_{k\geq 1} \bigcap_{n \geq k} B_\infty^{\eps + \eps_k} = B_\infty.
\]

In a more general setting, given a sequence of closed balls $(K_k)_{k\geq 1}$ such that $\cup_{k\geq 1}K_k=E$, and given a sequence $(B_n)_{n\geq 1}$, if there exists $B_\infty$ such that for any $k\geq 1$, the sequence $(B_n\cap K_k)_{n \geq 1}$ converges in Hausdorff metric to $B_\infty\cap K_k$, then
\[
\lim_n B_n = B_\infty.
\]
\end{rem}

\begin{rem}\label{rem:totalborne}
In a metric space $(E,d)$ such that every bounded closed set is compact (this is the case for instance, of locally compact length spaces), it is easier to understand the meaning of stability of a class.
For any sequence $(B_n)_{n \geq 1}$ of closed sets of $E$, and any sequence $(K_k)_{k\geq 1}$ of increasing closed balls such that $\cup_{k\geq 1}K_k=E$, there exist a set $B_\infty$ and a subsequence (relabeled $(B_n)_{n\geq 1}$) such that $B_n\cap K_k$ converges in Hausdorff metric to $B_\infty$ and 
\[
\lim_n B_n = B_\infty.
\]

In this case, a stable class in the sense of definition \ref{def:ensemblestable} is just a compact class for the Hausdorff convergence on large balls.
Indeed, in such spaces $E$, there exists an increasing sequence of compacts $(K_k)_{k \geq 1}$ such that $\cup_{k\geq 1} K_k = E$, take for instance a sequence of balls centered on the same point, with an increasing radius. 
The Hausdorff convergence on large balls is then equivalent to the Hausdorff convergence of $(B_n\cap K_k)_{n \geq 1}$ for any $k \in \mathbb{N}$.
Since the closed sets in $K_k$ forms a compact class for the Hausdorff convergence, there exists a subsequence of $(B_n\cap K_k)_{n \geq 1}$ converging to some $B_\infty^k$.
Using diagonal argument, we may extract a subsequence of the original sequence $(B_n)_{n \geq 1}$ such that for any $k \in \mathbb{N}$, $(B_n\cap K_k)_{n \geq 1}$ converges in Hausdorff metric to $B_\infty^k$.
It is easily checked that $B_\infty := \cup_k B_\infty^k$ is a limit of a subsequence of $(B_n)_{n \geq 1}$, in the sense of definition \ref{def:ensemblestable}.
\end{rem}

Let us introduce some examples of stable sets.
\begin{exe}
Take $\mathcal{F}$ as the set of all closed sets. Stability is then obvious since the limit considered in the definition of a stable set is always closed as shown in the remark \ref{rem:lim_ferme}.
\end{exe}
\begin{exe}
The set of all balls is generally not stable, but is does not take much to make is stable. The set of all closed balls and half spaces in $\mathbb{R}^d$ is a stable class.
This assertion can be proved using parametrization of the center of the balls in spherical coordinates and using compactness of spheres.
\end{exe}
\begin{exe}
Other shapes of sets of $\mathbb{R}^d$ make stable classes. Ellipsoids, rectangles, or convex bodies with bounded diameter (by some fixed $R<\infty$) all form stable classes.
And it is possible to get rid of the bounded diameter by adding some sets to the class.
\end{exe}
\begin{exe}\label{ex:eps-stable}
If $\mathcal{F}$ is a stable class of convex sets of a metric space $(E,d)$ such that closed balls are compacts, then
\[
\mathcal{F}_\eps := \{ \cup_{F \in \mathcal{G}} F ; \mathcal{G} \subset \mathcal{F}, \forall F,G \in \mathcal{G} \inf_{x \in F, y \in G} d(x,y) \geq \eps \},
\]
is also a stable class (see lemma \ref{lem:stable}).
\end{exe}

\subsubsection{Size function}

As we aim to define a smallest set of the sets of $\mathcal{F}$, we need to define a notion of size. 
This is done using a function $\tau$, meant to measure the \textit{size} of a set.
In order to localize the mass, we will thus minimize the size of a set, among all sets given a probability measure.
% Thus, two isometric sets will have same size. We will call $\tau$ the size function. It is defined on all sets but for any set $B$, $\tau(B)=\tau(\bar{B})$ (where $\bar{B}$ denotes the topological closure of $B$). Thus, we will focus on size of closed sets.

In order to express our assumptions on $\tau$, we first define the \textit{Hausdorff contrast}.

\begin{defn}[Hausdorff contrast]\label{def:contrasteHaus}\sloppy
Let $A$ and $B$ be two closed subset of a Polish space $(E,d)$. The \textbf{Hausdorff contrast} between $A$ and $B$ is defined by
\[
\Haus(A|B) := \inf\{\eps>0|A\subset B^\eps\}.
\]
\end{defn}

We can then remark that the Hausdorff metric $d_H(A,B)$ between two closed sets $A$ and $B$ is then
\[
d_H(A,B)=\Haus(A|B)\vee\Haus(B|A).
\]

We now define formally a size function.
\begin{defn}[Size function]
Let $(E,d)$ be a metric space. A function $\tau:\mathcal{F}\rightarrow \mathbb{R}^+$ is called a \textbf{size function} if it
% is of the form
%\[
%\tau(B)=\Phi(\mathcal{M}(B,.))
%\]
%where $\Phi$ is a function on the space of the decreasing functions on $\mathbb{R}^+$ taking values in $\mathbb{N}$ and 
satisfies the three following conditions:
\begin{enumerate}
\sloppy
\item $\tau$ is increasing, i.e. $A\subset B \implies \tau(A) \leq \tau(B)$, \label{increasing}
\item for any decreasing sequence $(A_n)_{n \geq 1} \subset \mathcal{F}$ such that $\tau(A_1)<\infty$ and $\Haus(A_n |\cap_k\nolinebreak A_k ) \rightarrow \nolinebreak 0$, the following holds $\tau(A_n) \rightarrow \tau(\cap_n A_n)$,\label{monotony}
\item for any sequence $(A_n)_{n\geq 1} \subset \mathcal{F}$, $\tau(\lim_n A_n) \leq \liminf_n \tau(A_n)$. \label{lsc}
\setcounter{compteur}{\value{enumi}} % prend le dernier numéro créé 
\end{enumerate}
\end{defn}

Hypothesis (H\ref{monotony}) on the size function requires some Hausdorff contrast.
This particular choice make the hypothesis weaker and allow the hypothesis to hold for size function that give finite size to non compact sets.
The consequences of these hypothesis will be more detailed in the sequel of the paper.

\subsection{Overview of the main result}

Our main result states that under the condition (H\ref{increasing}), (H\ref{monotony}) and (H\ref{lsc}), for the empirical measure $\mu_n$, and a stable class $\mathcal{F}$,
\[
\tau^\alpha \leq \liminf_n \tau_n^\alpha \leq \limsup_n \tau_n^\alpha \leq \lim_\eps \tau^{\alpha-\eps},
\]
where $\tau_n^\alpha=\min\{\tau(B);B\in \mathcal{F},\mu_n(B)\geq 1-\alpha\}$.

It implies the convergence of $\tau_n^\alpha$ when $. \mapsto \tau^.$ is continuous.

The result actually holds for a wider class of sequence of measures $(\mu_n)_{n\geq 1}$. 

Moreover, simple conditions on the sequence imply the convergence of the minimizers of the $\tau_n^\alpha$ for different metrics (depending on the conditions assumed). This is discussed in the next sections.

\section{First properties}

\subsection{Existence}

Let us recall the setting. $(E,d)$ is a Polish space and $\mu$ is a Borel probability measure on $(E,d)$. Given a size function $\tau$, a stable class $\mathcal{F}$ of closed sets of $E$, and a level $\alpha$, we define the support $B^\alpha$ of the $\alpha$-localized measure $\mu^\alpha$ of $\mu$ by - when possible:
\[
B^\alpha \in \argmin \{\tau(A); A \in \mathcal{F}, \mu(A)\geq 1-\alpha\},
\]
and set
\[
\mu^\alpha=\mu(.|B^\alpha).
\]
Our first concern is whether $B^\alpha$ exists. It is the matter of the next result. 

\begin{thm}[Existence of a  minimum]\label{existence}
Let $(E,d)$ be a Polish space, $\mathcal{F}$ a stable class and $\mu$ a probability measure on $(E,\mathcal{B}(E))$. Set $0<\alpha<1$. Suppose (H\ref{lsc}). Then, there exists $B\in \mathcal{F}$ such that
\[
B \in \argmin \left\{\tau(A);A \in \mathcal{F}(E), \mu(A)\geq 1-\alpha \right\}.
\]
\end{thm}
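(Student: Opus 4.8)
The plan is to run the direct method of the calculus of variations: produce a minimizing sequence, extract a subsequence whose set-limit is simultaneously feasible and a member of $\mathcal{F}$, and conclude by the lower semicontinuity hypothesis (H\ref{lsc}). First I would record that the admissible set is nonempty: since $\mathcal{F}$ is stable we have $E\in\mathcal{F}$, and $\mu(E)=1\geq 1-\alpha$, so $E$ is admissible and $I:=\inf\{\tau(A);A\in\mathcal{F},\mu(A)\geq 1-\alpha\}$ satisfies $0\leq I\leq\tau(E)<\infty$. I then pick a minimizing sequence $(A_m)_{m\geq1}\subset\mathcal{F}$ with $\mu(A_m)\geq 1-\alpha$ and $\tau(A_m)\to I$.

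The heart of the proof — and the step I expect to be the main obstacle — is that the set-limit $\lim_m$ of Definition \ref{def:ensemblestable} is the \emph{inner} (Kuratowski) limit $\{x:\,d(x,A_m)\to 0\}$, whereas the measure constraint only passes well to the \emph{outer} limit. Concretely, using that $\mu$ is finite, continuity from above as $\eps\downarrow 0$ together with the reverse Fatou lemma gives, for any sequence,
\[
\mu\big(\{x:\liminf_m d(x,A_m)=0\}\big)=\lim_{\eps\to 0}\mu\Big(\limsup_m A_m^\eps\Big)\geq\limsup_m\mu(A_m^\eps)\geq 1-\alpha,
\]
but the set on the left is the outer limit, which may be strictly larger than $\lim_m A_m$ (think of alternating halves of a symmetric measure on $\mathbb{R}$). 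To reconcile the two I would first pass to a subsequence along which the inner and outer limits coincide. This is where separability of the Polish space enters: the maps $x\mapsto d(x,A_m)\wedge 1$ are $1$-Lipschitz and uniformly bounded, so a diagonal argument over a countable dense set yields a subsequence (still written $(A_m)$) along which they converge pointwise to a $1$-Lipschitz function $g$; setting $B:=g^{-1}(0)$, which is closed in accordance with Remark \ref{rem:lim_ferme}, one checks that this subsequence Kuratowski-converges to $B$, so that inner limit $=$ outer limit $=\lim_m A_m=B$. The displayed inequality then reads $\mu(B)\geq 1-\alpha$, i.e. $B$ is feasible.

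It remains to secure $B\in\mathcal{F}$ and optimality. For membership I would apply the stability assumption on $\mathcal{F}$ to this Kuratowski-convergent subsequence: it produces a further subsequence $(A_{m_k})_{k\geq1}$ with $\lim_k A_{m_k}\in\mathcal{F}$. Because a Kuratowski limit is inherited by every subsequence, $\lim_k A_{m_k}$ is again exactly $B$, so $B\in\mathcal{F}$, and the feasibility $\mu(B)\geq 1-\alpha$ is of course unchanged. Finally (H\ref{lsc}) applied to $(A_{m_k})$ yields $\tau(B)=\tau(\lim_k A_{m_k})\leq\liminf_k\tau(A_{m_k})=I$, while admissibility of $B$ forces $\tau(B)\geq I$; hence $\tau(B)=I$ and $B$ is the desired minimizer.

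In short, the only genuine difficulty is the mismatch between the inner set-limit used by (H\ref{lsc}) and by stability, and the outer limit to which the measure constraint is semicontinuous. Extracting a Kuratowski-convergent subsequence — so that the two limits agree — is the device that removes it, and the one point worth double-checking is that this extraction is compatible with the subsequent stability extraction: it is, since a convergent Kuratowski limit, and with it both the feasibility bound and the identification of the limit set, survive every further passage to a subsequence.
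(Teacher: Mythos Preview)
Your argument is correct and takes a genuinely different route from the paper's.

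The paper does not extract a Kuratowski-convergent subsequence directly. Instead it passes to the conditional measures $\mu_n:=\mu(\,\cdot\mid B_n)$, shows they are tight, extracts a weakly convergent subsequence $\mu_n\Rightarrow\mu_\infty$, and invokes the auxiliary Lemma \ref{limcomp} ($\supp\mu_\infty\subset\lim_n\supp\mu_n$) together with the portmanteau theorem on the open sets $B^\eps$ to obtain $\mu(\lim_n B_n)\geq\mu(\lim_n\supp\mu_n)\geq 1-\alpha$; stability and (H\ref{lsc}) then finish as in your last paragraph. In effect, the paper circumvents the inner/outer limit mismatch you isolate by transferring the problem to weak convergence of probability measures, where portmanteau supplies the needed lower semicontinuity on open neighborhoods.

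Your approach is more elementary and self-contained for this single statement: the diagonal extraction on the $1$-Lipschitz functions $x\mapsto d(x,A_m)\wedge 1$ over a countable dense set gives Kuratowski convergence directly, so that inner and outer limits coincide and the reverse Fatou bound $\mu(\text{outer limit})\geq 1-\alpha$ becomes exactly the feasibility of $B=\lim_m A_m$. The paper's detour through conditional measures and Lemma \ref{limcomp} is heavier here, but that machinery is reused verbatim in the proofs of Proposition \ref{rightcont} and Theorem \ref{consistency}, which is presumably why the author sets it up early.

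One small slip: you assert $\tau(E)<\infty$, which the paper does not assume (several later statements explicitly add $\tau^\alpha<\infty$ as a hypothesis). This is harmless --- if $I=+\infty$ then $E$ itself is already a minimizer --- but you should say so rather than claim finiteness.
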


\begin{rem}
Hypothesis (H\ref{lsc}) can not just be omitted. Indeed, if $\tau(B)$ is defined as the Lebesgue measure of the closure of $B$ on $\mathbb{R}^d$, take
\[
\mu= \alpha \gamma_d + (1-\alpha) q,
\]
where $q$ is a probability measure supported on $\mathbb{Q}^d$ and $\gamma_d$ is the standard Gaussian measure on $\mathbb{R}^d$, then, the sequence $(B_n)_{n \geq 1}$ defined by
\[
B_n:=\{x_k\}_{1 \leq k \leq n} \cup B(0,r_n),
\]
with $\{x_n\}_{n \geq 1}=\mathbb{Q}^d$ and $r_n \rightarrow 0$ so that $\mu(B_n)=1-\alpha$, is a minimizing sequence. And $\tau(B_n)=\tau\left(B(0,r_n)\right)$ so that $\tau^\alpha = 0$ but $\tau\left(\mathbb{Q}^d\right)=+\infty$.
\end{rem}

The minimizer is not necessarily unique. This seems natural with the following example. Take $\mu$ as the uniform law on the unit square and an isometric $\tau$. Then any translation small enough of the minimizer will have the same size and the same measure, and will thus be another minimizer.
Another result (corollary \ref{corHausdorffconsistency}) will comfort us proving that minimizers form a compact set for Hausdorff metric.

The stability condition on $\mathcal{F}$ is needed for existence of the minimum. However, it can be lightly weakened.
%There is however a way to lighten this hypothesis.

\begin{rem}[On stability of $\mathcal{F}$]\label{rem:faible-stable}
Since the minimal size $\min\{\tau(A);A\in \mathcal{F},\mu_n(A)\geq 1-\alpha\}$ is bounded if $\tau^\alpha=\min\{\tau(A);A\in \mathcal{F},\mu(A)\geq 1-\alpha\}<\infty$, then we may suppose instead of stability of $\mathcal{F}$ that all the classes
\[
\mathcal{F}^M:=\mathcal{F}\cap \{A; \tau(A)\leq M\}
\]
for $M<\infty$ are stable. It is a weaker notion since $\tau(\lim B_n) \leq \liminf\tau(B_n)$ for any sequence $(B_n)_{n \geq 1}$ in $\mathcal{F}$, under (H\ref{lsc}).
\end{rem}

\subsection{\texorpdfstring{Regularity of $\tau$}{Regularity of tau}}
Denote
\[
\tau^\alpha=\inf \{\tau(A); A \in \mathcal{F}, \mu(A)\geq 1-\alpha, B\}.
\]
It seems natural to expect $\alpha \mapsto \tau^\alpha$ to be continuous when $\mu$ is regular enough. It also seems natural, for instance, to have $B^\alpha$ growing continuously when $\alpha$ decreases to zero, for a unimodal measure $\mu$. 
This is the concern of this paragraph, the first one establishing the right continuity.

\begin{prop}[Right continuity]\label{rightcont}
Let $(E,d)$ be a Polish space, $\mu$ a probability measure on $(E,\mathcal{B}(E))$ and $\mathcal{F}$ a stable class. Let $0<\alpha<1$. Then, under (H\ref{lsc}), $\alpha \mapsto \tau^\alpha$ is right continuous.
\end{prop}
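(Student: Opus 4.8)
The plan is to exploit the monotonicity of $\alpha \mapsto \tau^\alpha$ to reduce the statement to a single nontrivial inequality, which is then obtained by a compactness argument on minimizers, in the spirit of Theorem \ref{existence}. First I would record that $\tau^\alpha$ is non-increasing in $\alpha$: enlarging $\alpha$ relaxes the constraint $\mu(A)\geq 1-\alpha$, so the admissible class grows and the infimum can only decrease. Consequently the right limit $\tau^{\alpha+}:=\lim_{\beta \downarrow \alpha}\tau^\beta=\sup_{\beta>\alpha}\tau^\beta$ exists and satisfies $\tau^{\alpha+}\leq \tau^\alpha$. Right continuity at $\alpha$ thus amounts to the reverse inequality $\tau^{\alpha+}\geq \tau^\alpha$. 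If $\tau^{\alpha+}=+\infty$ this is immediate, so I may assume $\tau^{\alpha+}=L<\infty$, in which case $\tau^\beta\leq L$ for every $\beta>\alpha$.

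Next I would construct a candidate limit set. Fix a sequence $\beta_n \downarrow \alpha$ with $\tau^{\beta_n}\uparrow L$. By Theorem \ref{existence}, for each $n$ there is a minimizer $B_n \in \mathcal{F}$ with $\mu(B_n)\geq 1-\beta_n$ and $\tau(B_n)=\tau^{\beta_n}\leq L$. Since the sizes stay bounded, Remark \ref{rem:faible-stable} lets me work inside the stable class $\mathcal{F}^M$ and extract a subsequence (relabelled $(B_n)$) for which $B_\infty := \lim_n B_n \in \mathcal{F}$. Hypothesis (H\ref{lsc}) then gives $\tau(B_\infty)\leq \liminf_n \tau(B_n)=L=\tau^{\alpha+}$. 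Hence, if I can show $\mu(B_\infty)\geq 1-\alpha$, the set $B_\infty$ is admissible at level $\alpha$ and therefore $\tau^\alpha\leq \tau(B_\infty)\leq \tau^{\alpha+}$, closing the argument.

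The main obstacle is precisely this measure lower bound $\mu(B_\infty)\geq 1-\alpha$. It does \emph{not} follow from $\mu(B_n)\geq 1-\beta_n \to 1-\alpha$ for an arbitrary subsequence: $\lim_n B_n$ is the Kuratowski inner limit, and the measure of an inner limit may be strictly smaller than $\limsup_n \mu(B_n)$ — for instance two alternating halves of an interval each carry mass $1/2$ while their inner limit is a single point of mass $0$. The remedy is to strengthen the mode of convergence, exactly as in Remarks \ref{rem:generalizedH} and \ref{rem:totalborne}: I would choose the subsequence so that $B_n\cap K_j$ converges to $B_\infty\cap K_j$ in Hausdorff distance along an increasing sequence of balls $K_j$ with $\bigcup_j K_j=E$, which by Remark \ref{rem:generalizedH} is compatible with $\lim_n B_n=B_\infty$. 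Then $B_n\cap K_j \subseteq (B_\infty\cap K_j)^{\eps_{n,j}}$ with $\eps_{n,j}\to 0$, so that
\[
\mu(B_n)\leq \mu\big((B_\infty\cap K_j)^{\eps_{n,j}}\big)+\mu(K_j^c).
\]
Letting $n\to\infty$ and using continuity of $\mu$ from above (as $(B_\infty\cap K_j)^{\eps}\downarrow B_\infty\cap K_j$ when $\eps\downarrow 0$, since $B_\infty\cap K_j$ is closed) yields $\limsup_n \mu(B_n)\leq \mu(B_\infty)+\mu(K_j^c)$.

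Finally I would let $j\to\infty$ and invoke tightness of the probability measure $\mu$, so that $\mu(K_j^c)\to 0$, giving $1-\alpha=\lim_n(1-\beta_n)\leq \limsup_n \mu(B_n)\leq \mu(B_\infty)$, as required. The delicate point to verify carefully is that such a Hausdorff-on-large-balls convergent subsequence can actually be extracted from the stable class: this is clear when bounded closed sets are compact, by Remark \ref{rem:totalborne}, and is the step that must be handled with the most care in the general Polish setting, where it relies on the same upper semicontinuity of $\mu$ along converging sets that underlies Theorem \ref{existence}.
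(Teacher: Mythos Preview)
Your overall strategy matches the paper's: reduce by monotonicity to $\tau^{\alpha+}\geq\tau^\alpha$, take minimizers $B_n$ at levels $\beta_n\downarrow\alpha$, pass to a subsequential limit $B_\infty\in\mathcal{F}$ via stability, and apply (H\ref{lsc}). The gap is exactly where you flag it, at $\mu(B_\infty)\geq 1-\alpha$. Your argument needs a subsequence along which $B_n\cap K_j\to B_\infty\cap K_j$ in Hausdorff distance for balls $K_j$ exhausting $E$, but such an extraction (Blaschke selection) requires each $K_j$ to be compact --- precisely the extra hypothesis of Remark~\ref{rem:totalborne}, which fails e.g.\ in an infinite-dimensional Hilbert space. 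Stability of $\mathcal{F}$ delivers only the Kuratowski lower limit; the inclusion $B_n\cap K_j\subset(B_\infty\cap K_j)^{\eps_{n,j}}$ you invoke is the \emph{upper} half of Hausdorff convergence and is simply not available in general. Your closing sentence defers to ``the same upper semicontinuity \dots\ that underlies Theorem~\ref{existence}'', but that proof does not argue via Hausdorff convergence of sets either, so the deferral does not close the gap.

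The paper's device, used identically in the proofs of Theorem~\ref{existence} and of this proposition, is to transfer compactness from sets to measures: condition $\mu$ on the minimizers, $\mu^{\alpha_n}=\mu(\,\cdot\mid B_{\alpha_n})$, observe that this family is tight (any compact $K$ with $\mu(K)\geq 1-\eps(1-\alpha_1)$ gives $\mu^{\alpha_n}(K)\geq 1-\eps$), extract a weak limit $\mu_\infty$, and set $B_\infty=\lim_n\supp\mu^{\alpha_n}$. Lemma~\ref{limcomp} gives $\supp\mu_\infty\subset B_\infty$, so portmanteau on the open set $B_\infty^\eps$ yields $1=\mu_\infty(B_\infty^\eps)\leq\mu(B_\infty^\eps)/(1-\alpha)$, whence $\mu(B_\infty)\geq 1-\alpha$ after $\eps\downarrow 0$. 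This uses only Prokhorov compactness on a Polish space and no set-level Hausdorff compactness. Your route could be salvaged by replacing the balls $K_j$ with an increasing sequence of tightness compacts for $\mu$ (inside which Blaschke selection does apply) and checking that each resulting Hausdorff limit sits inside $\lim_n B_n$, but that is a substantively different argument from the one you wrote.
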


The continuity will require some more hypotheses as shows the following example of discontinuity. 
Take $\mu = (\delta_x + \delta_y)/2$ and $\alpha=1/2$, and it is not difficult to find some $\tau$ that is not continuous on $\alpha$.

Thus, it is clear that continuity property of this function needs regularity on the measure we want to localize, with respect to the class $\mathcal{F}$. 
This is why we introduce the notion of $\mathcal{F}$-regularity.

\begin{defn}[$\mathcal{F}$-regularity]
A probability measure $\mu$ is said to be \textbf{$\mathcal{F}$-regular} if for all $B \in \mathcal{F}$, any $\delta>0$ and any $C \in \mathcal{F}$ such that $B \subset C$ and $\mu(B) < \mu(B^\delta \cap C)$, there exists $A \in \mathcal{F}$ such that
\begin{align*}
A \subset B^\delta \cap C, \\
\mu(B) < \mu(A).
\end{align*}
\end{defn}

The only purpose of this notion is the continuity of the application $\alpha \mapsto \tau^\alpha$. It is restrictive on $\mu$ only when $\mathcal{F}$ is not rich enough.
Taking $\mathcal{F}$ as the class of all closed sets of $E$ make any probability measure $\mathcal{F}$-regular. Indeed, since $\mu(B^\delta\cap C) = \lim_n \mu(B^{\delta-1/n}\cap C)$, there exists $n \geq 1$ such that $\mu(B) < \mu(B^{\delta - 1/n} \cap C)$ and then we can choose $A:=\overline{B^{\delta}} \cap C$.
On the other hand, if $\mathcal{F}$ is not rich enough so that $\tau(\mathcal{F})$ is not even connected, it is easy to build a measure $\mu$ that is not $\mathcal{F}$-regular.

\begin{prop}[Continuity]\label{continuity}
Let $\mu$ be a probility measure on a Polish space $(E,d)$. Suppose (H\ref{increasing}), (H\ref{monotony}) and (H\ref{lsc}), and that $\mu$ is $\mathcal{F}$-regular, has a connected support and that $\tau^\alpha$ is finite for any $\alpha>0$ then, the mapping $\alpha \mapsto \tau^\alpha$ is continuous.
\end{prop}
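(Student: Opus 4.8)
The plan is to combine the already-established right continuity (Proposition \ref{rightcont}) with a separate argument for left continuity. Since $\alpha \mapsto \tau^\alpha$ is non-increasing (enlarging $\alpha$ relaxes the constraint $\mu(A)\geq 1-\alpha$ and thus enlarges the feasible set), the left limit $L:=\lim_{\beta\uparrow\alpha}\tau^\beta=\inf_{\beta<\alpha}\tau^\beta$ exists, is finite because $\tau^\beta<\infty$ for every $\beta<\alpha$, and satisfies $L\geq\tau^\alpha$. It therefore suffices to prove $L\leq\tau^\alpha$, and for this it is enough to produce, for each $\eta>0$, a set $A\in\mathcal{F}$ with $\mu(A)>1-\alpha$ strictly and $\tau(A)\leq\tau^\alpha+\eta$: such an $A$ is admissible at the level $\beta:=1-\mu(A)<\alpha$, whence $L\leq\tau^\beta\leq\tau(A)\leq\tau^\alpha+\eta$, and letting $\eta\to 0$ gives $L\leq\tau^\alpha$.

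Fix a minimizer $B=B^\alpha$ furnished by Theorem \ref{existence}, so $B\in\mathcal{F}$, $\mu(B)\geq 1-\alpha$ and $\tau(B)=\tau^\alpha$. If $\mu(B)>1-\alpha$ then $B$ itself witnesses the claim for every $\eta$, and in fact $\tau^\beta=\tau^\alpha$ for all $\beta\in(1-\mu(B),\alpha]$, so left continuity is immediate. Assume therefore $\mu(B)=1-\alpha$. The connectedness of $\supp \mu$ now forces every neighborhood of $B$ to carry strictly more mass: if $\mu(B^\delta)=\mu(B)$ for some $\delta>0$, then the open set $B^\delta\setminus B$ is $\mu$-null and hence disjoint from $\supp \mu$, so that $B\cap\supp \mu=B^\delta\cap\supp \mu$ is clopen in $\supp \mu$; connectedness together with $0<\mu(B)<1$ then forces $\supp \mu\subset B$, contradicting $\mu(B)=1-\alpha<1$. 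Thus $\mu(B^\delta)>\mu(B)$ for every $\delta>0$. Applying $\mathcal{F}$-regularity with this $B$, the scale $\delta$, and $C=E\in\mathcal{F}$ (the hypothesis $\mu(B)<\mu(B^\delta\cap E)=\mu(B^\delta)$ being exactly what was just shown) yields, for each $\delta>0$, a set $A_\delta\in\mathcal{F}$ with $A_\delta\subset B^\delta$ and $\mu(A_\delta)>1-\alpha$.

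The remaining and decisive step is to control the size of these enlargements from above, which is the crux of the argument; since the only hypothesis producing upper bounds on $\tau$ is (H\ref{monotony}), the plan is to replace the (non-nested) witnesses $A_\delta$ by a genuinely decreasing, shrinking family. Concretely I aim to build $(C_n)_{n\geq1}\subset\mathcal{F}$ decreasing with $C_n\subset B^{\delta_n}$, $\delta_n\downarrow 0$, $\mu(C_n)>1-\alpha$, $\tau(C_1)<\infty$, and $\Haus(C_n|\bigcap_k C_k)\to 0$. Granting this, stability of $\mathcal{F}$ (Definition \ref{def:ensemblestable}) gives $C_\infty:=\bigcap_n C_n\in\mathcal{F}$; from $C_n\subset B^{\delta_n}$ and $B$ closed one gets $\bigcap_n B^{\delta_n}=B$, hence $C_\infty\subset B$ and $\tau(C_\infty)\leq\tau(B)=\tau^\alpha$ by (H\ref{increasing}); and (H\ref{monotony}) yields $\tau(C_n)\to\tau(C_\infty)\leq\tau^\alpha$. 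Since $\mu(C_n)>1-\alpha$, each $C_n$ supplies a witness of the kind required in the first paragraph, completing the proof. The main obstacle is precisely the construction of this decreasing, Hausdorff-shrinking family inside $\mathcal{F}$: $\mathcal{F}$-regularity alone provides sets of larger mass inside each $B^\delta$, but neither nested nor a priori shrinking to a common limit, so one must iterate it along a carefully chosen sequence of scales and radii, using $\mu(B^\delta)>\mu(B)$ at every scale (connectedness) and the finiteness $\tau^\beta<\infty$ for $\beta<\alpha$ to initialize the sequence, so that the Hausdorff contrasts $\Haus(C_n|\bigcap_k C_k)$ do tend to $0$. This interplay, where $\mathcal{F}$-regularity, the connectedness of the support, and the three axioms of a size function are invoked simultaneously, is the technical heart of the argument.
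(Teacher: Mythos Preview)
Your strategy is exactly the paper's: right continuity is Proposition~\ref{rightcont}, so reduce to left continuity; pick a minimizer $B$, dispose of $\mu(B)>1-\alpha$ trivially; for $\mu(B)=1-\alpha$ use connectedness of $\supp\mu$ (your clopen argument is the content of Lemma~\ref{connected}) to get $\mu(B^\delta)>\mu(B)$; then manufacture from $\mathcal{F}$-regularity a decreasing family in $\mathcal{F}$ squeezing down toward $B$ so that (H\ref{monotony}) forces the sizes to converge to $\tau^\alpha$.

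The gap is that you never actually build that family: you list the target properties of $(C_n)$, write ``granting this'', and then describe the obstacle rather than resolve it. The one-line device you are missing is to feed the previously constructed set back into $\mathcal{F}$-regularity as the ambient set $C$. Applying $\mathcal{F}$-regularity with $C=E$ each time, as you do, produces witnesses $A_\delta$ with no reason to be nested. Instead, once $K_{n-1}\in\mathcal{F}$ with $K_{n-1}\subset B^{\delta_{n-1}}$ and $\mu(K_{n-1})>1-\alpha$ is in hand, apply $\mathcal{F}$-regularity to the triple $(B,\delta_n,K_{n-1})$ to obtain $K_n\in\mathcal{F}$ with $K_n\subset B^{\delta_n}\cap K_{n-1}$ and $\mu(K_n)>\mu(B)=1-\alpha$. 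Nestedness $K_n\subset K_{n-1}$ is then automatic, and $K_n\subset B^{\delta_n}$ with $\delta_n\downarrow0$ gives $\bigcap_n K_n\subset B$, whence $\tau\bigl(\bigcap_n K_n\bigr)\le\tau(B)=\tau^\alpha$ by (H\ref{increasing}) and $\tau(K_n)\to\tau\bigl(\bigcap_n K_n\bigr)$ by (H\ref{monotony}). This recursive use of the slot $C$ is precisely why the definition of $\mathcal{F}$-regularity carries that extra parameter; without exploiting it there is no evident way to force the Hausdorff contrasts required by (H\ref{monotony}).

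A minor point: stability of $\mathcal{F}$ is stated for $\lim_n$ of a subsequence, not for intersections, so your line ``stability gives $C_\infty=\bigcap_n C_n\in\mathcal{F}$'' needs the extra observation that for a decreasing sequence of closed sets one has $\lim_n C_n=\bigcap_n C_n$.
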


\begin{rem}
The condition $\tau^\alpha<\infty$ just avoids a degenerated case.
\end{rem}

This continuity condition is a first step toward the main matters of our article, the consistency.

\section{Consistency}

\subsection{\texorpdfstring{$\tau$-tightness}{tau-tightness}}

In order to show the consistency of the mass localization when a sequence of measures $(\mu_n)_{n\geq 1}$ converges to a measure $\mu$, we must make some assumptions on the sequence of measures. The first and most important hypothesis for consistency is the $\tau$-tightness.

\begin{defn}[$\tau$-tightness]
A sequence of random probability measures $(\mu_n)_{n \geq 1}$ almost surely weakly converging to a measure $\mu$ is \textbf{$\tau$-tight} if for any $\delta >0$ and any $B \in \mathcal{F}$ such that $\tau(B)<\infty$, almost surely, for any $C\in \mathcal{F}$ such that $B\subset C$ and $ \mu(B)\leq\liminf_n \mu_n(C)$, there exists $A \in \mathcal{F}$ such that
\begin{align*}
\mu(B) \leq \liminf_n \mu_n(A), \\
B \subset A \subset B^\delta \cap C,\\
\tau(A)<\infty.
\end{align*}
\end{defn}

An important remark on this definition is that a $\tau$-tight sequence of random measures does not have necessarily almost surely $\tau$-tight realizations.
This can happen to empirical measures for instance.
This subtlety lies in the position of "almost surely" in the definition, that is, after the choice of $B$ and $\delta$ made.

We can also remark the following.
Inequality $\mu(B)\leq\liminf_n \mu_n(C)$ is not a consequence of $B\subset C$.
Indeed, the portmanteau theorem states $\limsup_n \mu_n(C) \leq \mu(C)$ and $\limsup_n \mu_n(B) \leq \mu(B)$ since $B$ and $C$ are closed.
The conditions for $\tau$-tightness on $B \in \mathcal{F}$ such that $\mu(B)=\lim_n \mu_n(B)$ is clearly verified for $A:=B$.
The definition of $\tau$-tightness can be understood as follows.
Whenever $(\mu_n)_{n\geq 1}$ does not catch all the $\mu$-mass of $B$ (i.e. $\liminf_n \mu_n(B) < \mu(B)$) but some set $C$ that contains $B$ has its $\mu$-mass well caught (i.e. $\mu(B)\leq\liminf_n \mu_n(C)$), then $\mathcal{F}$ must have an element $A$ that also have its $\mu$-mass well caught (i.e. $\mu(B) \leq \liminf_n \mu_n(A)$), of finite size (i.e. $\tau(A) < \infty$) and that is stuck between $B$ and a $\delta$-neighborhood of $B$ intersected with $C$, for small $\delta$.

The following proposition states that this notion is not empty, and includes the empirical measures.

\begin{prop}[$\tau$-tightness of the empirical measure]\label{tauemp}
Let $\mu$ be a probability measure on $E$ such that $\tau^\alpha<\infty$ for $0 < \alpha < 1$. 
Let $(X_i)_{i\geq 1}$ be a sequence of i.i.d. random variables with common law $\mu$. Set $\mu_n=\frac{1}{n}\sum_{1 \leq i \leq n} \delta_{X_i}$. Then, $(\mu_n)_{n\geq 1}$ is $\tau$-tight.
\end{prop}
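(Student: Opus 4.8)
The plan is to exploit the fact that, for a \emph{single} fixed Borel set, the empirical measures obey the strong law of large numbers, which upgrades the weak-convergence inequality into an equality and lets us simply take $A := B$. The delicate point is entirely one of quantifier order: the pair $(\delta, B)$ is fixed \emph{before} the almost-sure event is chosen, so it suffices to exhibit, for each fixed $B$, an event of full probability on which the construction succeeds.

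First I would record the standing prerequisite of the definition, namely that $(\mu_n)_{n\geq 1}$ converges weakly to $\mu$ almost surely; for i.i.d.\ samples from a Borel probability measure on a Polish space this is classical (Varadarajan's theorem), so the sequence is eligible to be called $\tau$-tight. Then I fix $\delta>0$ and $B\in\mathcal{F}$ with $\tau(B)<\infty$. The variables $\mathbf{1}_{\{X_i\in B\}}$, $i\geq 1$, are i.i.d.\ Bernoulli with mean $\mu(B)$, so the strong law of large numbers yields
\[
\mu_n(B)=\frac{1}{n}\sum_{i=1}^n \mathbf{1}_{\{X_i\in B\}} \xrightarrow[n\to\infty]{} \mu(B)\qquad\text{almost surely}.
\]
Let $\Omega_B$ denote the corresponding event of probability one; it is crucial that $\Omega_B$ may depend on $B$, which is exactly what the placement of "almost surely" after the choice of $B$ permits.

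On $\Omega_B$ we have $\liminf_n \mu_n(B)=\mu(B)$. I then take any $C\in\mathcal{F}$ with $B\subset C$ and $\mu(B)\leq \liminf_n\mu_n(C)$, and set $A:=B$. Then $A\in\mathcal{F}$ and $\tau(A)=\tau(B)<\infty$; the chain $B\subset A\subset B^\delta\cap C$ holds because $B\subset B^\delta$ always and $B\subset C$ by hypothesis; and finally $\mu(B)=\liminf_n\mu_n(B)=\liminf_n\mu_n(A)$, so $\mu(B)\leq\liminf_n\mu_n(A)$. All three requirements hold, which establishes $\tau$-tightness. Note that the argument uses only $\tau(B)<\infty$, so the blanket hypothesis $\tau^\alpha<\infty$ is not actually needed for this particular statement.

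I do not anticipate a genuine obstacle: the content lies in the quantifier interplay already flagged in the remark following the definition, and the verification reduces to the pointwise strong law of large numbers together with the observation (noted in the text) that $A:=B$ works whenever $\mu(B)=\lim_n\mu_n(B)$. The only point requiring discipline is to resist strengthening the conclusion to a single almost-sure event valid for all $B$ simultaneously, which would demand a Glivenko--Cantelli-type uniformity over $\mathcal{F}$ that is neither available nor required; the definition is designed precisely to sidestep it, and this is the reason a $\tau$-tight sequence need not have almost surely $\tau$-tight realizations.
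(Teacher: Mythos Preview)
Your proof is correct and follows essentially the same route as the paper: fix $B$ (and $\delta$), invoke the strong law of large numbers for the single set $B$ to get $\mu_n(B)\to\mu(B)$ almost surely, and then take $A:=B$, which immediately fulfills all three requirements. Your write-up is in fact cleaner on two points the paper glosses over: you explicitly record the a.s.\ weak convergence prerequisite (Varadarajan), and you handle the quantifier order with care, whereas the paper's opening sentence about the existence of some $B$ with $\tau(B)<\infty$ and $\mu(B)\geq 1-\alpha$ is slightly beside the point, since the definition must be checked for \emph{every} $B\in\mathcal{F}$ with $\tau(B)<\infty$.
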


The empirical measure is actually not the only simple example of $\tau$-tight sequence. The following corollary gives a simple condition for a sequence of random probability measures to be $\tau$-tight.

\begin{cor}\label{cor:tau_tension}
Let $(\mu_n)_{n\geq 1}$ be a sequence of random probability measure on $E$ almost surely weakly converging to some measure $\mu$, such that $\tau^\alpha<\infty$, for any $0 < \alpha < 1$. 
If for all $B\in \mathcal{F}$, almost surely, $\mu(B) \leq \lim_n \mu_n(B)$, then $(\mu_n)_{n \geq 1}$ is $\tau$-tight.
\end{cor}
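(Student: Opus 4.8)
The plan is to verify the definition of $\tau$-tightness directly by exhibiting the witnessing set explicitly, and the key observation is that no genuine enlargement of $B$ is ever required. Recall that the definition asks, for fixed $\delta$ and $B$, for a set $A$ satisfying three clauses: a lower mass bound $\mu(B)\leq\liminf_n\mu_n(A)$, the sandwich $B\subset A\subset B^\delta\cap C$, and finiteness $\tau(A)<\infty$. Since the hypothesis of the corollary supplies, for each fixed $B\in\mathcal{F}$, the almost sure inequality $\mu(B)\leq\lim_n\mu_n(B)$, the lower mass bound is already available at $B$ itself. The natural candidate is therefore simply $A:=B$.

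Concretely, I would fix $\delta>0$ and $B\in\mathcal{F}$ with $\tau(B)<\infty$, exactly as the definition of $\tau$-tightness prescribes. By hypothesis there is an almost sure event, depending only on $B$, on which $\mu(B)\leq\lim_n\mu_n(B)$. Working on this event, let $C\in\mathcal{F}$ be arbitrary with $B\subset C$ and $\mu(B)\leq\liminf_n\mu_n(C)$, and take $A:=B$. The three required conclusions are then immediate. The mass condition $\mu(B)\leq\liminf_n\mu_n(A)$ holds because $\liminf_n\mu_n(B)=\lim_n\mu_n(B)\geq\mu(B)$ on the chosen event. The inclusion $B\subset A\subset B^\delta\cap C$ holds because $A=B$, because $B\subset B^\delta$ always, and because $B\subset C$ by the choice of $C$. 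Finally $\tau(A)=\tau(B)<\infty$ is part of the standing assumption on $B$.

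The only point requiring a moment of care, and the nearest thing to an obstacle, is the placement of the ``almost surely'' quantifier, since a $\tau$-tight sequence need not have almost surely $\tau$-tight realizations. The definition fixes $\delta$ and $B$ before passing to the almost sure event, and only then quantifies over all admissible $C$ inside that event. Because the null set furnished by the hypothesis depends on $B$ alone, and not on $\delta$ or on $C$, the same almost sure event serves uniformly for every $\delta$ and every $C$, so the quantifier structure matches the definition exactly. No appeal to the portmanteau theorem or to any regularity of $\mu$ is needed; the hypothesis is tailored precisely so that $B$ serves as its own witness.
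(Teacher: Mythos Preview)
Your proof is correct and follows exactly the approach the paper uses. The paper does not give a separate proof of this corollary, but its proof of Proposition~\ref{tauemp} (the empirical-measure case) proceeds by the identical device: invoke the almost sure bound $\mu(B)\leq\lim_n\mu_n(B)$ for the fixed $B$ (there via the law of large numbers, here by hypothesis) and take $A:=B$, checking the three clauses verbatim. Your discussion of the quantifier placement is also on point and matches the paper's remark that the ``almost surely'' sits after the choice of $B$ and $\delta$.
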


This corollary says that $\tau$-tightness is implied by almost sure convergence of $\mu_n(B)$ for each $B\in \mathcal{F}$ and thus, dropping the "almost sure" makes the $\tau$-tightness much more restrictive.

We can now state our first result on consistency.

\subsection{\texorpdfstring{$\tau$-consistency}{tau consistency}}

Our goal is to show that when $\mu_n$ converges to $\mu$, the size $\tau_n^\alpha$ of the smallest element of a given class $\mathcal{F}$ with $\mu_n$-mass at least $1-\alpha$ converges to the size $\tau^\alpha$ of the smallest of $\mu$-mass at least $1-\alpha$.
In other words, we want to prove consistency of the smallest size $\tau^\alpha$.
The following theorem states conditions for this consistency to hold.

\begin{thm}[Consistency]\label{consistency}
Let $(E,d)$ be a Polish space, $\mathcal{F}$ a stable class and $(\mu_n)_{n \geq 1}$ a $\tau$-tight sequence of random probability measures on $(E,\mathcal{B}(E))$ almost surely weakly converging to some measure $\mu$. Set $0<\alpha<1$. Choose any $B_n^\alpha \in \argmin \{\tau(A); A \in \mathcal{F}, \mu_n(A)\geq 1-\alpha\}$  and $\mu_n^\alpha=\mu_n(.|B_n^\alpha)$, for all $n\geq 1$.
Then, under hypotheses (H\ref{increasing}), (H\ref{monotony}) and (H\ref{lsc}), the sequence $(\mu_n^\alpha)_{n\in \mathbb{N}}$ is almost surely totally bounded for the weak convergence topology and $B_\infty^\alpha:=\lim_k B_{n_k}$ along any converging subsequence $(\mu_{n_k})_{k \geq 1}$ of $(\mu_n)_{n \geq 1}$ satisfies $\mu(B_\infty^\alpha) \geq 1- \alpha$ and almost surely
\[
\tau^\alpha \leq \tau(B_\infty^\alpha) \leq \liminf_{n \rightarrow \infty} \tau(B_n^\alpha) \leq \limsup_{n \rightarrow \infty} \tau(B_n^\alpha) \leq \lim_{\eps \rightarrow 0^+} \tau^{\alpha-\eps}.
\]

Moreover, if $\mu$ is $\mathcal{F}$-regular and its support is connected, the five terms above are equal.
\end{thm}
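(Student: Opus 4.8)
The plan is to prove the displayed chain in two independent halves and then read off the equality case from continuity. The top inequality $\limsup_n\tau(B_n^\alpha)\le\lim_{\eps\to0^+}\tau^{\alpha-\eps}$ will come from $\tau$-tightness, the bottom one $\tau^\alpha\le\tau(B_\infty^\alpha)\le\liminf_n\tau(B_n^\alpha)$ from feasibility plus the lower semicontinuity (H\ref{lsc}), and the equality from Proposition \ref{continuity}. I work on the almost sure event on which $\mu_n\to\mu$ weakly and all the (countably many) instances of $\tau$-tightness invoked below hold at once; this has probability one since $\tau$-tightness is an almost sure statement for each fixed $(B,\delta)$, and I only use it for $B$ in a fixed countable family of minimizers and $\delta$ in a fixed sequence tending to $0$. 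I may assume $\lim_{\eps\to0^+}\tau^{\alpha-\eps}<\infty$, as otherwise the chain is trivial.

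\emph{Upper bound.} Fix $\eps>0$ with $\tau^{\alpha-\eps}<\infty$ and, using Theorem \ref{existence}, pick a minimizer $B:=B^{\alpha-\eps}$, so $\mu(B)\ge1-\alpha+\eps$ and $\tau(B)=\tau^{\alpha-\eps}$. I build a decreasing sequence $(A_m)_{m\ge1}\subset\mathcal F$ squeezing down to $B$: set $A_0:=E$ and, given $A_{m-1}$ with $B\subset A_{m-1}$ and $\mu(B)\le\liminf_n\mu_n(A_{m-1})$, apply $\tau$-tightness to this $B$ with $\delta=\delta_m\downarrow0$ and $C=A_{m-1}$ to obtain $A_m\in\mathcal F$ with $B\subset A_m\subset B^{\delta_m}\cap A_{m-1}$, $\tau(A_m)<\infty$ and $\mu(B)\le\liminf_n\mu_n(A_m)$. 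The sequence is decreasing with $\bigcap_mA_m=B$ and $\Haus(A_m\mid B)\le\delta_m\to0$, so (H\ref{monotony}) gives $\tau(A_m)\to\tau(B)=\tau^{\alpha-\eps}$. Since $\liminf_n\mu_n(A_m)\ge\mu(B)>1-\alpha$, each $A_m$ is eventually feasible for $\mu_n$, whence $\limsup_n\tau_n^\alpha\le\tau(A_m)$; letting $m\to\infty$ and then $\eps\to0$ yields $\limsup_n\tau_n^\alpha\le\lim_{\eps\to0^+}\tau^{\alpha-\eps}$.

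\emph{Total boundedness, mass bound, lower bound.} The upper bound shows $\tau(B_n^\alpha)=\tau_n^\alpha\le M$ eventually for some finite $M$, so the minimizers lie in the stable class $\mathcal F^M$ (Remark \ref{rem:faible-stable}). As a weakly convergent sequence of probability measures on a Polish space is uniformly tight and $\mu_n^\alpha(E\setminus K)\le\mu_n(E\setminus K)/(1-\alpha)$, the family $(\mu_n^\alpha)$ is uniformly tight, hence totally bounded for the weak topology. Along a converging subsequence (relabelled $k$) I may thus assume $\mu_{n_k}^\alpha\to\nu$ weakly, $\mu_{n_k}(B_{n_k})\to c\in[1-\alpha,1]$, and $B_{n_k}\to B_\infty^\alpha:=\lim_kB_{n_k}\in\mathcal F$. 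The key point is $\operatorname{supp}\nu\subset B_\infty^\alpha$: if $x\notin B_\infty^\alpha$ then $\limsup_kd(x,B_{n_k})>0$, so a fixed ball around $x$ misses $B_{n_k}$ along a sub-subsequence, and the portmanteau theorem forces $\nu$ to vanish near $x$; hence $\nu(B_\infty^\alpha)=1$. Moreover $\mathbf{1}_{B_{n_k}}\mu_{n_k}=\mu_{n_k}(B_{n_k})\,\mu_{n_k}^\alpha\to c\,\nu$ weakly while $\mathbf{1}_{B_{n_k}}\mu_{n_k}\le\mu_{n_k}\to\mu$, so $c\,\nu\le\mu$ and $\mu(B_\infty^\alpha)\ge c\,\nu(B_\infty^\alpha)=c\ge1-\alpha$. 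Feasibility of $B_\infty^\alpha$ then gives $\tau^\alpha\le\tau(B_\infty^\alpha)$, and choosing the subsequence to realise $\liminf_n\tau_n^\alpha$, (H\ref{lsc}) gives $\tau(B_\infty^\alpha)=\tau(\lim_kB_{n_k})\le\liminf_k\tau_{n_k}^\alpha=\liminf_n\tau_n^\alpha$. With the upper bound and the trivial $\liminf\le\limsup$ this is the full chain.

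\emph{Equality and main obstacle.} When $\mu$ is $\mathcal F$-regular with connected support and $\tau^\alpha<\infty$, Proposition \ref{continuity} makes $\alpha\mapsto\tau^\alpha$ continuous, so $\lim_{\eps\to0^+}\tau^{\alpha-\eps}=\tau^\alpha$; the two ends of the chain coincide and all five terms are equal. I expect the genuine obstacle to be the mass bound $\mu(B_\infty^\alpha)\ge1-\alpha$. The naive route — pushing $\mu_{n_k}(B_{n_k})\ge1-\alpha$ directly through the set limit — fails because $\lim_kB_{n_k}$ is only a lower (Kuratowski) limit, along which mass can escape (two alternating point masses already lose everything). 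This is precisely why the total boundedness of $(\mu_n^\alpha)$ appears in the statement: it supplies the auxiliary weak limit $\nu$, and the support inclusion $\operatorname{supp}\nu\subset B_\infty^\alpha$ together with $c\,\nu\le\mu$ transfers the mass correctly. The secondary delicate point is the $\tau$-tightness construction above, where the choice $C=A_{m-1}$ is what makes the sequence genuinely \emph{decreasing} so that (H\ref{monotony}) applies.
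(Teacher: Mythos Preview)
Your proof is correct and follows essentially the same architecture as the paper's: the upper bound via recursive $\tau$-tightness with $C=A_{m-1}$ and then (H\ref{monotony}), the lower bound via tightness of $(\mu_n^\alpha)$, the support inclusion $\operatorname{supp}\nu\subset B_\infty^\alpha$, and (H\ref{lsc}), with equality from Proposition~\ref{continuity}. The one place you diverge is the mass bound $\mu(B_\infty^\alpha)\ge 1-\alpha$: the paper passes through Prokhorov neighborhoods, bounding $\mu_n((B_\infty^\alpha)^\eps)$ by $\mu((B_\infty^\alpha)^{2\eps})+\eps$ and letting $\eps\to 0$, whereas you argue that $\mathbf 1_{B_{n_k}}\mu_{n_k}\to c\nu$ and $\mathbf 1_{B_{n_k}}\mu_{n_k}\le\mu_{n_k}\to\mu$ force $c\nu\le\mu$; this is a cleaner route (the implication is the standard fact that $\int f\,d\sigma\le\int f\,d\mu$ for all nonnegative $f\in C_b$ yields $\sigma\le\mu$). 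Your explicit bookkeeping of the countably many a.s.\ instances of $\tau$-tightness is also more careful than the paper's.
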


Note that the $\tau$-tightness condition is required only for the last inequality.

It is rather clear the if $\alpha \mapsto \tau^\alpha$ is not continuous for the measure $\mu$, we can hardly expect consistency of the smallest size $\tau^\alpha$. This first step of consistency brings us to consider consistency of the smallest set of the class itself.

\subsection{Minimizer consistency}

The smallest set in $\mathcal{F}$ with $\mu$-mass greater than $1-\alpha$ is not always unique, and therefore consistency does not just mean that minimizer for $\mu_n$ converges to the minimizer for $\mu$.
In order to give a sense to consistency, we will consider the set of all minimizers and the Hausdorff contrast between sets of elements of $\mathcal{F}$ (for some underlying metric $d_\mathcal{F}$ on $\mathcal{F}$).
We thus first recall the definition of Hausdorff contrast.
Let $A$ and $B$ be two sets. The Hausdorff contrast between $A$ and $B$ is defined by
\[
\Haus(A|B) := \inf\{\eps>0|A\subset B^\eps\}.
\]

Let us denote, for $0 < \alpha < 1$, a sequence of measures $(\mu_n)_{n\geq 1}$ and a measure $\mu$;
\[
\mathcal{S}^\alpha_n = \argmin\{\tau(A); A \in \mathcal{F}, \mu_n(A) \geq 1-\alpha\},
\]
and
\[
\mathcal{S}^\alpha = \argmin\{\tau(A); A \in \mathcal{F}, \mu(A) \geq 1-\alpha\}.
\]
The sets $\mathcal{S}^\alpha$ and $ \mathcal{S}^\alpha_n$ are thus two subsets of $\mathcal{F}$.
What we want is to find conditions under which
\[
\Haus(\mathcal{S}^\alpha_n|\mathcal{S}^\alpha) \rightarrow 0,
\]
when $n$ tends to infinity.

We now state and comment briefly the two hypotheses that will be made for our main result.

\begin{enumerate}
\setcounter{enumi}{\value{compteur}} % initialise la liste avec le dernier numéro de la précédente 
\item $\cdotp \mapsto \tau^\cdotp$ is continuous at $\alpha$, \label{taucont}
\item $\forall (A_n)_{n\geq 1} \subset \mathcal{F}$ such that $\tau(\lim_n A_n)<\infty$, \\
\[
\lim\tau(A_n) = \tau(\lim A_n) \implies d_\mathcal{F}(A_n,\lim_k A_k) \rightarrow 0
\] \label{d-cont}
\setcounter{compteur}{\value{enumi}}
\end{enumerate}

where $d_\mathcal{F}$ denotes a metric on $\mathcal{F}$. A typical example of such metric is the Hausdorff metric or the measure of symmetric difference.
Section \ref{sec:sizeEx} is devoted to these examples and conditions that imply (H\ref{d-cont}).

The continuity condition (H\ref{taucont}) is a consequence of the proposition \ref{continuity}: a connected support for an $\mathcal{F}$-regular measure suffices.

%Going back to our canonical example \ref{tau_canonique} of size function $\tau$: $\tau(B)=\int\phi \left(\mathcal{M}(B,t) \right)dt$, hypothesis (H\ref{sincreasing}) just means strict increase of $\phi$ and (H\ref{taucompact}) is always true.

We can now state a direct consequence of theorem \ref{consistency} and hypotheses (H\ref{taucont}) and (H\ref{d-cont}).

\begin{thm}[Consistency of the minimizers]\label{Hausdorffconsistency}
Let $(E,d)$ be a Polish space and $(\mu_n)_{n \geq 1}$ a sequence of random $\tau$-tight probability measures on $(E,\mathcal{B}(E))$ almost surely weakly converging to some $\mathcal{F}$-regular measure $\mu$. Set $0<\alpha<1$. Equip $\mathcal{F}$ with a metric $d_\mathcal{F}$

Suppose (H\ref{monotony}), (H\ref{lsc}), (H\ref{taucont}) (for the measure $\mu$), and (H\ref{d-cont}).

Then, almost surely,
\[
\Haus(\mathcal{S}^\alpha_n|\mathcal{S}^\alpha)\rightarrow 0.
\]
\end{thm}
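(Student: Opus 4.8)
The plan is to argue by contradiction: I would turn a failure of $\Haus(\mathcal{S}_n^\alpha|\mathcal{S}^\alpha)\to 0$ into a sequence of $\mu_n$-minimizers that stay uniformly $d_\mathcal{F}$-far from $\mathcal{S}^\alpha$, and then use Theorem~\ref{consistency} together with (H\ref{d-cont}) to force a subsequence of them to converge to an element of $\mathcal{S}^\alpha$, which is absurd. The whole argument is pathwise: since (H\ref{taucont}) and (H\ref{d-cont}) are deterministic, I would fix once and for all a realization in the almost-sure event on which the conclusion of Theorem~\ref{consistency} holds, and work there; establishing the conclusion on that event gives it almost surely.

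The first step is to record that on this event the chain of Theorem~\ref{consistency} collapses. Indeed, continuity of $\cdotp\mapsto\tau^\cdotp$ at $\alpha$, hypothesis (H\ref{taucont}), gives $\lim_{\eps\to 0^+}\tau^{\alpha-\eps}=\tau^\alpha$, so for \emph{any} choice of minimizers $B_n\in\mathcal{S}_n^\alpha$ the inequalities
\[
\tau^\alpha \leq \tau(B_\infty^\alpha) \leq \liminf_n \tau(B_n) \leq \limsup_n \tau(B_n) \leq \lim_{\eps\to 0^+}\tau^{\alpha-\eps}
\]
become equalities. In particular $\tau(B_n)\to\tau^\alpha$, and whenever stability of $\mathcal{F}$ lets us extract a subsequence whose set-limit $B_\infty:=\lim_k B_{n_k}$ belongs to $\mathcal{F}$, Theorem~\ref{consistency} guarantees $\mu(B_\infty)\geq 1-\alpha$ while the collapse gives $\tau(B_\infty)=\tau^\alpha$; hence $B_\infty\in\mathcal{S}^\alpha$.

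Now suppose $\Haus(\mathcal{S}_n^\alpha|\mathcal{S}^\alpha)\not\to 0$. Then there are $\eps>0$ and a subsequence (relabelled $n$) with $\mathcal{S}_n^\alpha\not\subset(\mathcal{S}^\alpha)^\eps$, so I may pick witnesses $B_n\in\mathcal{S}_n^\alpha$ with $d_\mathcal{F}(B_n,A)\geq\eps$ for all $A\in\mathcal{S}^\alpha$. These $B_n$ are genuine $\mu_n$-minimizers, so the previous step applies: stability of $\mathcal{F}$ yields a further subsequence with $B_\infty^\alpha:=\lim_k B_{n_k}\in\mathcal{F}$, and then $B_\infty^\alpha\in\mathcal{S}^\alpha$ with $\tau(B_{n_k})\to\tau^\alpha=\tau(B_\infty^\alpha)$. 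Since $\tau(\lim_k B_{n_k})=\tau^\alpha<\infty$ and $\lim_k\tau(B_{n_k})=\tau(\lim_k B_{n_k})$, hypothesis (H\ref{d-cont}) forces $d_\mathcal{F}(B_{n_k},B_\infty^\alpha)\to 0$, contradicting $d_\mathcal{F}(B_{n_k},B_\infty^\alpha)\geq\eps$.

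I expect the main obstacle to be the bookkeeping of the extraction rather than any single hard estimate: one must check that the witnesses $B_n$ are admissible inputs to Theorem~\ref{consistency} (they are, being minimizers for $\mu_n$), that the set-limit produced by stability genuinely lands in $\mathcal{F}$ so that $\tau(B_\infty^\alpha)$ is defined and $B_\infty^\alpha$ is competitive for $\mu$, and that the finiteness $\tau^\alpha<\infty$ required by (H\ref{d-cont}) is in force (which it is, being implicit in (H\ref{taucont})). Once these points are secured, the reduction to Theorem~\ref{consistency} and (H\ref{d-cont}) is purely formal.
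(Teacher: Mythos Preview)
Your proposal is correct and follows essentially the same approach as the paper: invoke Theorem~\ref{consistency} together with (H\ref{taucont}) to see that any sequence $B_n\in\mathcal{S}_n^\alpha$ has a subsequence whose set-limit $B_\infty^\alpha$ lies in $\mathcal{S}^\alpha$ with $\tau(B_{n_k})\to\tau(B_\infty^\alpha)$, then apply (H\ref{d-cont}) to upgrade this to $d_\mathcal{F}$-convergence. The only cosmetic difference is that the paper argues directly by picking near-maximizers of the contrast (choosing $B_n^\alpha$ so that $1/n+d_\mathcal{F}(B_n^\alpha,B_\infty^\alpha)\geq\Haus(\mathcal{S}_n^\alpha|\mathcal{S}^\alpha)$), while you phrase the same subsequence-of-subsequence argument as a contradiction; the content is identical.
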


We now make some remarks on the necessity of the hypotheses.

\begin{rem}[On (H\ref{taucont})]
Continuity condition on $\cdotp \mapsto \tau^\cdotp$ at $\alpha$ can not be dropped. Indeed, if we choose $d\mu_n= (1_{[0;1-\alpha-1/n]} + 1_{[2;2+\alpha+1/n]}) d\lambda$, then, taking for instance $\tau(B)=\int_0^1 \mathcal{M}(B,t)tdt$, yields to the minimizers $\supp \mu_n^\alpha= [0;1-\alpha-1/n] \cup [2;2+1/n]$ and $\supp \mu_\infty^\alpha = [0;1-\alpha] $.
\end{rem}

The result is stated with Hausdorff contrast and not Hausdorff metric. One can wonder what happens with the Hausdorff metric.

\begin{rem}[Convergence with Hausdorff metric]\label{converse}
The convergence of $\Haus(\mathcal{S}^\alpha|\mathcal{S}^\alpha_n)$ (i.e. the other contrast), is not always true.
Take for instance $\tau(B)=\int_0^1 \mathcal{M}(B,t)t dt$ (see definition \ref{def:packing}) and $d\mu_n=f_n d\lambda$ on $\mathbb{R}$, with $f_n(x)=1 + x/n$ for $-1/2<x<1/2$ and $f(x)=0$ otherwise. Then, the minimizer $[-1/2;1/2-\alpha]$ for $\mu=\mathbf{1}_{[0;1]}$ is not included in any $\eps$-neighborhood of $[-1/2+\alpha;1/2]$.
\end{rem}

When $\mathcal{S}^\alpha$ is a singleton, theorem \ref{Hausdorffconsistency} then states that any $B_n^\alpha$ converges to the minimizer for the limit $\mu$.
When $\mathcal{S}^\alpha$ is not a singleton, it states that all $B_n^\alpha$ in $\mathcal{S}^\alpha_n$ gets close to an element of $\mathcal{S}^\alpha$, uniformly. 
However, the remark \ref{converse} precises that there could be some elements of $\mathcal{S}^\alpha$ that will not be approximated.

We can derive from the proof of this result the following corollary.

\begin{cor}\label{corHausdorffconsistency}
Let $(E,d)$ be a Polish space and $\mu$ a probability measure on $(E,\mathcal{B}(E))$ such that $\tau^\alpha<\infty$. Set $0<\alpha<1$. Suppose (H\ref{monotony}), (H\ref{lsc}) and (H\ref{d-cont}). 
Then $\argmin \{\tau(A); A \in \mathcal{F}, \mu(A)\geq 1-\alpha\}$ is compact for the Hausdorff metric topology.
\end{cor}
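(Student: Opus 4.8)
The plan is to establish sequential compactness of $\mathcal{S}^\alpha := \argmin\{\tau(A); A \in \mathcal{F}, \mu(A) \geq 1-\alpha\}$ for the Hausdorff metric: since this metric induces a metrizable topology, it suffices to show that every sequence of minimizers admits a subsequence converging, in the Hausdorff metric, to an element of $\mathcal{S}^\alpha$ (non-emptiness of $\mathcal{S}^\alpha$ being granted by theorem \ref{existence}). The observation that lets me import the consistency machinery is that the constant sequence $\mu_n := \mu$ is $\tau$-tight: it weakly converges to $\mu$ and satisfies $\mu(B) \leq \lim_n \mu_n(B)$ for every $B \in \mathcal{F}$ (with equality), so corollary \ref{cor:tau_tension} applies thanks to $\tau^\alpha < \infty$. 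This is exactly the setting in which the corollary is meant to be read off the proof of theorem \ref{Hausdorffconsistency}.

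Fix a sequence $(B_k)_{k \geq 1} \subset \mathcal{S}^\alpha$. Each $B_k$ satisfies $\tau(B_k) = \tau^\alpha < \infty$, so the whole sequence lies in $\mathcal{F}^{\tau^\alpha} = \mathcal{F} \cap \{A; \tau(A) \leq \tau^\alpha\}$, which is stable by remark \ref{rem:faible-stable}. Hence there is a subsequence, still denoted $(B_k)$, with $\lim_k B_k =: B_\infty \in \mathcal{F}$, and $B_\infty$ is closed by remark \ref{rem:lim_ferme}.

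The heart of the argument is to show $\mu(B_\infty) \geq 1-\alpha$. Here I would reuse the feasibility step from the proof of theorem \ref{consistency} for the constant sequence $\mu_n = \mu$: along a further subsequence making the localized measures $\mu(\cdot \mid B_k)$ weakly convergent, $\tau$-tightness together with (H\ref{monotony}) and (H\ref{lsc}) prevents the mass from escaping and yields $\mu(B_\infty) \geq 1-\alpha$. Granting this, $B_\infty$ is feasible, so $\tau(B_\infty) \geq \tau^\alpha$; on the other hand (H\ref{lsc}) gives $\tau(B_\infty) = \tau(\lim_k B_k) \leq \liminf_k \tau(B_k) = \tau^\alpha$. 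Therefore $\tau(B_\infty) = \tau^\alpha$, i.e. $B_\infty \in \mathcal{S}^\alpha$, and moreover $\lim_k \tau(B_k) = \tau^\alpha = \tau(\lim_k B_k)$. This last equality is automatic here because every $B_k$ is an exact minimizer for $\mu$ itself, which is exactly why the corollary can dispense with (H\ref{taucont}) and with $\mathcal{F}$-regularity: in theorem \ref{Hausdorffconsistency} these serve only to force $\lim_k \tau(B_k) = \tau^\alpha$, a conclusion that is free of charge for minimizers of $\mu$.

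Finally, since $\tau(\lim_k B_k) = \lim_k \tau(B_k) < \infty$, hypothesis (H\ref{d-cont}), read with $d_\mathcal{F}$ equal to the Hausdorff metric, upgrades the set-limit into genuine Hausdorff convergence $B_k \to B_\infty$ (consistent with remark \ref{rem:generalizedH}). Thus every sequence in $\mathcal{S}^\alpha$ has a subsequence Hausdorff-converging to a point of $\mathcal{S}^\alpha$, which is the desired compactness. The main obstacle is the feasibility step $\mu(B_\infty) \geq 1-\alpha$: a lower limit of sets generically loses mass (two disjoint minimizers visited alternately produce an empty $\lim_k B_k$), and it is precisely $\tau$-tightness, available for free because $\mu$ is a fixed probability measure, that rules this out by forbidding mass to leak along minimizing configurations.
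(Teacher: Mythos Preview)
Your approach is exactly the paper's: set $\mu_n=\mu$ and read off the convergence \ref{firstfirstdH} from the proof of theorem \ref{Hausdorffconsistency}. You are in fact more explicit than the paper about why (H\ref{taucont}) can be dropped here --- each $B_k$ already satisfies $\tau(B_k)=\tau^\alpha$, so the chain $\tau^\alpha\leq\tau(B_\infty)\leq\liminf\tau(B_k)=\tau^\alpha$ collapses without appealing to continuity of $\alpha\mapsto\tau^\alpha$.

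One point of confusion worth correcting: the feasibility step $\mu(B_\infty)\geq 1-\alpha$ does \emph{not} rely on $\tau$-tightness (nor on (H\ref{monotony}) or (H\ref{lsc})). In the proof of theorem \ref{consistency} this inequality is obtained purely from tightness of the localized measures $\mu(\cdot\mid B_k)$, extraction of a weakly convergent subsequence, lemma \ref{limcomp}, and portmanteau --- precisely the mechanism of theorem \ref{existence}. The paper notes explicitly after theorem \ref{consistency} that $\tau$-tightness is used only for the last inequality $\limsup\tau(B_n^\alpha)\leq\lim_{\eps\to0}\tau^{\alpha-\eps}$, which is vacuous in your situation. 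Your ``alternating disjoint minimizers'' example is therefore resolved by the subsequence extraction (along which the localized measures converge), not by $\tau$-tightness; and your invocation of corollary \ref{cor:tau_tension} (which, incidentally, asks for $\tau^\alpha<\infty$ for \emph{all} $\alpha$, not just the fixed one) is unnecessary.
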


The tools developed in the proofs of these results were also effective to prove some continuity on the minimizer on the level $\alpha$.

\subsection{Minimizer continuity}

The proof of the theorem of the minimizer consistency is based on the two lemmas \ref{lemHausdorff} and \ref{contrasteHaus}.
The same technique of proof leads to the following result.

\begin{prop}\label{contHaus}
Let $(E,d)$ be a Polish space and a $\mathcal{F}$-regular probability measure $\mu$ on $(E,\mathbb{B}(E))$. Set $0 < \alpha <1$ and $(\alpha_n)_{n \geq 1}$ converging to $\alpha$. Suppose (H\ref{monotony}), (H\ref{lsc}), (H\ref{taucont}), and (H\ref{d-cont}). Then,
\[
\Haus(\mathcal{S}^{\alpha_n}|\mathcal{S}^\alpha)\rightarrow 0.
\]
\end{prop}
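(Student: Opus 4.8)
The plan is to argue by contradiction, reusing the subsequence-extraction scheme of Theorems \ref{consistency} and \ref{Hausdorffconsistency}. Suppose the conclusion fails. Then there is some $\eps>0$ and, after passing to a subsequence (still indexed by $n$), a choice of minimizers $B_n \in \mathcal{S}^{\alpha_n}$ that stay away from $\mathcal{S}^\alpha$, i.e. $d_\mathcal{F}(B_n,A)\geq \eps$ for every $A \in \mathcal{S}^\alpha$, equivalently $B_n \notin (\mathcal{S}^\alpha)^\eps$. By definition of the minimizers, $\tau(B_n)=\tau^{\alpha_n}$, and since $\alpha_n \to \alpha$, hypothesis (H\ref{taucont}) gives $\tau(B_n)\to \tau^\alpha$.

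Next I would extract, using stability of $\mathcal{F}$ exactly as in the proof of Theorem \ref{existence}, a further subsequence $(B_{n_k})_{k\geq 1}$ whose limit $B_\infty:=\lim_k B_{n_k}$ belongs to $\mathcal{F}$. Lower semicontinuity (H\ref{lsc}) then yields $\tau(B_\infty)\leq \liminf_k \tau(B_{n_k})=\tau^\alpha$. The decisive step is to check that $B_\infty$ is feasible at level $\alpha$, namely $\mu(B_\infty)\geq 1-\alpha$. Here the situation is more favourable than in Theorem \ref{consistency}, because the measure $\mu$ is the same for every index, so no $\tau$-tightness hypothesis is needed: once the extracted subsequence is controlled in the upper Hausdorff sense, so that $B_{n_k}\subset B_\infty^\delta$ eventually for each fixed $\delta$ (this is the content we borrow from lemma \ref{lemHausdorff}, using that $\mu$ is Radon on the Polish space $E$ so mass does not escape to infinity), one gets $\mu(B_{n_k})\leq \mu(B_\infty^\delta)$, hence $\limsup_k \mu(B_{n_k})\leq \mu(B_\infty^\delta)$, and letting $\delta\to 0$ with $\bigcap_{\delta>0} B_\infty^\delta = B_\infty$ gives $\mu(B_\infty)\geq \limsup_k \mu(B_{n_k})\geq \liminf_k(1-\alpha_{n_k})=1-\alpha$. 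Thus $B_\infty$ is admissible at level $\alpha$, so $\tau(B_\infty)\geq \tau^\alpha$, and therefore $\tau(B_\infty)=\tau^\alpha$ and $B_\infty \in \mathcal{S}^\alpha$.

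It then remains to convert size convergence into $d_\mathcal{F}$ convergence through (H\ref{d-cont}). Since $\tau(\lim_k B_{n_k})=\tau(B_\infty)=\tau^\alpha<\infty$ and $\tau(B_{n_k})\to \tau^\alpha=\tau(\lim_k B_{n_k})$, hypothesis (H\ref{d-cont}) yields $d_\mathcal{F}(B_{n_k},B_\infty)\to 0$. But $B_\infty\in\mathcal{S}^\alpha$, so $B_{n_k}$ eventually lies in $(\mathcal{S}^\alpha)^\eps$, contradicting $d_\mathcal{F}(B_{n_k},A)\geq \eps$ for all $A\in\mathcal{S}^\alpha$. This contradiction establishes $\Haus(\mathcal{S}^{\alpha_n}|\mathcal{S}^\alpha)\to 0$. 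Note that only the contrast in this direction is proved, in accordance with remark \ref{converse}; the two technical points (feasibility of $B_\infty$ and the passage from size to $d_\mathcal{F}$) are handled by reusing lemmas \ref{lemHausdorff} and \ref{contrasteHaus} verbatim.

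The hardest step is the feasibility estimate $\mu(B_\infty)\geq 1-\alpha$. The obstacle is that the operation $\lim_k$ is only a lower (Kuratowski) limit, along which the measure is not in general lower semicontinuous: an alternating sequence of half-intervals shows $\mu(\lim_k B_{n_k})$ can fall strictly below $\liminf_k \mu(B_{n_k})$. What rescues the argument is that the subsequence produced by the stability extraction converges in the stronger two-sided sense on large balls (as in remark \ref{rem:totalborne}), which together with the Radon property of $\mu$ supplies the needed upper Hausdorff inclusions $B_{n_k}\subset B_\infty^\delta$; the $\mathcal{F}$-regularity of $\mu$ enters upstream by guaranteeing the continuity (H\ref{taucont}) used in the very first step.
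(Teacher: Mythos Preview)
Your overall architecture matches the paper's: extract a subsequence, identify $B_\infty=\lim_k B_{n_k}\in\mathcal F$, show $\tau(B_\infty)=\tau^\alpha$ via (H\ref{lsc}) and (H\ref{taucont}), then invoke (H\ref{d-cont}). The genuine gap is precisely where you flag it: the feasibility estimate $\mu(B_\infty)\geq 1-\alpha$.

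Your argument for feasibility rests on the inclusion $B_{n_k}\subset B_\infty^{\delta}$ for large $k$, but none of the tools you cite delivers this. Lemma \ref{lemHausdorff} gives the \emph{reverse} inclusion $B_\infty\subset B_{n_k}^{2\eps}$ (under a finiteness assumption on packing numbers that is not available here anyway). Remark \ref{rem:totalborne} only applies when closed bounded sets are compact; the proposition is stated for an arbitrary Polish space, so this fails e.g.\ in any infinite-dimensional separable Hilbert space. And stability of $\mathcal{F}$ promises only that $\lim_k B_{n_k}\in\mathcal{F}$, not two-sided Hausdorff convergence. A concrete obstruction: with $E=\mathbb{R}$ and $B_n=\{0\}\cup\{n\}$ one has $\lim_n B_n=\{0\}$ yet $B_n\not\subset\{0\}^\delta$ for any fixed $\delta$, so the Kuratowski lower limit simply does not control mass from above.

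The paper bypasses this entirely: it passes to the conditioned measures $\mu_n:=\mu(\cdot\,|\,B_n)$, shows they form a tight sequence (because $\mu$ itself is tight on a Polish space and $\mu(B_n)\geq 1-\alpha_n$ is bounded away from $0$), extracts a weak limit $\mu_\infty$, and then uses Lemma \ref{limcomp} to get $\supp\mu_\infty\subset B_\infty$. Portmanteau on the open set $B_\infty^\eps$ then yields
\[
1=\mu_\infty(B_\infty^\eps)\leq\liminf_k\frac{\mu(B_\infty^\eps\cap B_{n_k})}{\mu(B_{n_k})}\leq\frac{\mu(B_\infty^\eps)}{1-\alpha},
\]
and letting $\eps\to 0$ gives $\mu(B_\infty)\geq 1-\alpha$. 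This is the same device used in the proof of Theorem \ref{existence}, and it works in any Polish space without any upper-Hausdorff control on the $B_{n_k}$. Once you replace your feasibility paragraph by this argument, the rest of your proof goes through.
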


In the case of a unique minimizer for all $\alpha$, this result states the continuity of the function that associate $\alpha$ to the minimizer for $\alpha$:
\[
\alpha \mapsto B^\alpha.
\]

\section{Examples}

In this section, we introduce examples of stable classes and size functions, in order to show the scope of our results. 

\subsection{Examples of stable classes}

\subsubsection{Closed sets}
The simplest example of stable class is the set of all closed sets. Indeed, given a sequence of closed sets $(A_n)_{n\geq 1}$, the limit $\lim A_n$ is also closed (see remark \ref{rem:lim_ferme}).

\subsubsection{Parametrized classes}

On $\mathbb{R}^d$, the set of closed balls and half spaces is also a stable class.
Indeed, each ball can be parametrized by $(x,r)$, the center and radius of the ball, and half spaces are limit (in the limit of sets we defined) of balls. 
From this parametrization, one can show that there exists a converging subsequence to any sequence of balls and half spaces in the sense we defined for the limit of sets.
%Taking a sequence of balls $(A_n)$, and its related sequence of parameters $(x_n,r_n)_{n\geq 1}$, either $r_n \rightarrow \infty$ or there is a bounded and converging subsequence of $(r_n)_{n \geq 1}$. In both cases, either $(x_n)_{n\geq 1}$ is bounded (and thus a subsequence is converging to some $x$) or it has a subsequence 

More generally, one can use the remark \ref{rem:faible-stable} which states that for a given size function $\tau$, the condition on the class $\mathcal{F}$ to be stable can be weakened to the condition that $\mathcal{F}^M=\mathcal{F}\cap\{B;\tau(B)\leq M\}$ is stable for each $M\in \mathbb{R}^+$.
Then, taking any parametrized class $\mathcal{F}$ such that convergence of the parameters implies convergence of the sets (in the sense defined for stable sets) and such that $\mathcal{F}^M$ is compact for the Hausdorff metric gives a stable set, since Hausdorff convergence implies convergence in the sense we defined for stable sets.

\subsubsection{$\eps$-separated unions}

Another example of stable set is the one of $\eps$-separated union of elements of a stable class of convex sets (see lemma \ref{lem:stable}).
For $\mathcal{F}$ a stable class of convex sets of a metric space $(E,d)$ such that bounded sets are compact, the following set is stable
\[
\mathcal{F}_\eps := \{ \cup_{F \in \mathcal{G}} F ; \mathcal{G} \subset \mathcal{F}, \forall F,G \in \mathcal{G} \inf_{x \in F, y \in G} d(x,y) \geq \eps \}.
\]

This sets give an application to classification.

\subsection{Examples of size function}\label{sec:sizeEx}

\subsubsection{Packing}

Our first example of size function is functions depending on the packing of sets. 
%%%%%PACKING et COVERGING %%%%%%%%%%
Let us recall the notion of packing and the more common one of covering (see \cite{kolmogorov1961}).

\begin{defn}[t-covering]\label{def:recouvrement}
A set $\{B_i\}_{i\in I}$ of subsets of $E$ is a \textbf{$t$-covering} of a set $B$ if the diameter of any $B_i$ does not exceed $2t$ and
\[
B \subset \bigcup_{i \in I} B_i.
\]
The cardinal of the smallest $t$-covering of $B$ is then called \textbf{covering number} of $B$ for $t$ and is denoted $\mathcal{N}(B,t)$.
\end{defn}

The logarithm of $\mathcal{N}(B,t)$ is sometimes called the metric entropy of $B$, or its Kolmogorov entropy.

\begin{defn}[t-separated]\label{def:packing}
A set $B$ is \textbf{$t$-separated} if the distance of every two distinct points of $B$ is strictly greater then $t$.
The cardinal of the greatest $t$-separated subset of $B$ is called \textbf{packing number} of $B$ for $t$ and is denoted $\mathcal{M}(B,t)$.
\end{defn}

The logarithm of $\mathcal{M}(B,t)$ is sometimes called capacity of $B$.

These two notions are intuitively linked and carry information on the size of the set. 
The following proposition due to \cite{kolmogorov1961} compares $\mathcal{N}$ and $\mathcal{M}$.
\begin{prop}\label{compar}
For any set $B$ of a metric space $(E,d)$,
\[
\mathcal{M}(B,2t)\leq \mathcal{N}(\bar{B},t)= \mathcal{N}(B,t) \leq \mathcal{M}(B,t)=\mathcal{M}(\bar{B},t).
\]
\end{prop}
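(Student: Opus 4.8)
The plan is to establish the chain of (in)equalities one link at a time, in each case exploiting either a maximal separated configuration or a minimal covering together with a simple pigeonhole or perturbation argument.

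For the central inequality $\mathcal{N}(B,t) \leq \mathcal{M}(B,t)$, I would start from a maximal $t$-separated subset $P \subset B$ of cardinality $\mathcal{M}(B,t)$. Maximality forces every point of $B$ to lie within distance $t$ of some point of $P$; otherwise such a point could be adjoined to $P$ while preserving the strict $t$-separation, contradicting maximality. Hence the closed balls $\{\bar B(p,t) : p \in P\}$, each of diameter at most $2t$, form a $t$-covering of $B$, so $\mathcal{N}(B,t) \leq |P| = \mathcal{M}(B,t)$. For the outer inequality $\mathcal{M}(B,2t) \leq \mathcal{N}(\bar B, t)$, I would take a maximal $2t$-separated subset $P \subset B$ and a minimal $t$-covering $\{B_i\}$ of $\bar B$. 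Since each $B_i$ has diameter at most $2t$, it contains at most one point of $P$: two distinct points of $P$ in the same $B_i$ would be at distance at most $2t$, contradicting strict $2t$-separation. The assignment of each $p \in P$ to a covering set containing it is therefore injective, giving $\mathcal{M}(B,2t) = |P| \leq |\{B_i\}| = \mathcal{N}(\bar B, t)$.

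It remains to treat the two closure equalities. For $\mathcal{N}(\bar B, t) = \mathcal{N}(B,t)$, the inequality $\mathcal{N}(B,t) \leq \mathcal{N}(\bar B, t)$ is immediate from $B \subset \bar B$; for the reverse, replacing each set $B_i$ of a minimal (finite) $t$-covering of $B$ by its closure $\bar{B_i}$ preserves the diameter bound and, since the closure of a finite union equals the union of the closures, yields a $t$-covering of $\bar B$ of the same cardinality. For $\mathcal{M}(B,t) = \mathcal{M}(\bar B, t)$, the inequality $\mathcal{M}(B,t) \leq \mathcal{M}(\bar B, t)$ is again immediate; for the reverse I would perturb, given a finite $t$-separated subset $Q \subset \bar B$, by setting $\eta := \min_{q \neq q'} d(q,q') - t > 0$ and choosing for each $q \in Q$ a point $b_q \in B$ with $d(b_q,q) < \eta/2$, so that the triangle inequality yields $d(b_q, b_{q'}) > t$ for $q \neq q'$, producing a $t$-separated subset of $B$ of the same cardinality.

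The main obstacle, and the only step requiring genuine care, is this final perturbation argument and its interaction with finiteness: as stated it handles any \emph{finite} $t$-separated subset of $\bar B$, so in the degenerate case $\mathcal{M}(\bar B, t) = \infty$ one applies it to arbitrarily large finite subsets to conclude $\mathcal{M}(B,t) = \infty$ as well. The same finiteness caveat underlies the closure-of-union step in the covering equality, which is legitimate precisely because a minimal covering with finite covering number consists of finitely many sets. Everything else reduces to the two elementary observations that a set of diameter at most $2t$ cannot meet a $2t$-separated set in more than one point, and that the closed $t$-balls about a maximal $t$-separated set cover the whole set.
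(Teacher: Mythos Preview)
Your argument is correct and is the standard textbook proof of this classical comparison between packing and covering numbers. The paper itself does not supply a proof of this proposition; it merely attributes the result to Kolmogorov--Tikhomirov (the reference \cite{kolmogorov1961}) and states it for later use, so there is nothing in the paper to compare your approach against.
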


%For a given set $B$, $\mathcal{M}$ is a function $t \mapsto \mathcal{M}(B,t)$, called packing function. Right continuity of this function plays an important role in our results.
%
%\begin{prop}
%For any set $B$ of a metric space $(E,d)$, the function $t \mapsto \mathcal{M}(B,t)$ is right continuous and decreasing.
%\end{prop}

The idea to make the notion of size depends on the packing of the set is not random and comes from the fact that packing appears in many notions of size. 
For instance, the packing measure defined in \cite{saint1988} is another definition of the Lebesgue measure (up to some constant factor) and is in the more general case of a metric space an isometric measure that coincide with Hausdorff measure on spaces with non fractional dimension as shown in \cite{saint1988}.

We will then study size function $\tau$ of the form
\[
\tau(B)=\Phi(\mathcal{M}(B,.)),
\]
for all $B\in \mathcal{F}$ for a function $\Phi$ on the set of packing functions.

In order to ensure that the condition (H\ref{d-cont}) is fulfilled, we define the following hypotheses.\\[1ex]

\noindent (H'\ref{increasing}) $\tau$ is strictly increasing (i.e. $A\subset B, A\neq B \implies \tau(A)<\tau(B)$),\\[2ex]
(H'\ref{lsc}) for any sequence $(B_n)_{n\geq 1} \subset \mathcal{F}$, $\Phi(\liminf \mathcal{M}(B_n,.)) \leq \liminf \Phi(\mathcal{M}(\lim B_n,.))$,

\begin{enumerate}
\setcounter{enumi}{\value{compteur}}
\item $\tau(A)< \infty \implies A \text{ totally bounded}.$ \label{taucompact}
\setcounter{compteur}{\value{enumi}}
\end{enumerate}

Hypotheses (H'\ref{increasing}) and (H'\ref{lsc}) imply respectively (H\ref{increasing}) and (H\ref{lsc}) and we can show that, together with (H\ref{taucompact}), they imply (H\ref{d-cont}), so that we have the following theorem.

\begin{thm}[Packing size function]\label{thm:packing}
Suppose that $\tau$ is a size function of the form $\tau(B)=\Phi(\mathcal{M}(B,.))$ on a stable class $\mathcal{F}$. Suppose (H'\ref{increasing}), (H\ref{monotony}), (H'\ref{lsc}), (H\ref{taucont}) and (H\ref{taucompact}).

Then, almost surely,
\[
\Haus(\mathcal{S}^\alpha_n|\mathcal{S}^\alpha)\rightarrow 0.
\]
\end{thm}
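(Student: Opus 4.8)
The plan is to obtain the conclusion as a direct application of Theorem \ref{Hausdorffconsistency}. In the setting of that theorem the measures are as usual (a $\tau$-tight sequence $(\mu_n)_{n\ge 1}$ of random probability measures converging almost surely weakly to an $\mathcal{F}$-regular $\mu$), the continuity (H\ref{taucont}) at $\alpha$ is assumed, and (H\ref{monotony}) is assumed directly; taking $d_\mathcal{F}$ to be the Hausdorff metric, it therefore suffices to deduce the remaining structural hypotheses (H\ref{increasing}), (H\ref{lsc}) and (H\ref{d-cont}) from the primed hypotheses and (H\ref{taucompact}). The implication (H'\ref{increasing}) $\Rightarrow$ (H\ref{increasing}) is immediate, since strict monotonicity entails monotonicity.

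For (H\ref{lsc}) I would first isolate a purely geometric lower-semicontinuity of the packing function along the set-limit: for every scale $t$, $\mathcal{M}(\lim_n A_n,t)\le \liminf_n \mathcal{M}(A_n,t)$. This follows by taking a maximal $t$-separated subset of $\lim_n A_n$, whose (finitely many) pairwise distances strictly exceed $t$, and replacing each of its points by a nearby point of $A_n$ — possible because each point of the lower limit is approached by points of $A_n$ — so that the perturbed family is still $t$-separated in $A_n$ for large $n$. Combining this pointwise inequality with the monotonicity of $\Phi$ and with (H'\ref{lsc}) gives $\tau(\lim_n A_n)=\Phi(\mathcal{M}(\lim_n A_n,\cdot))\le \Phi(\liminf_n \mathcal{M}(A_n,\cdot))\le \liminf_n \Phi(\mathcal{M}(A_n,\cdot))=\liminf_n \tau(A_n)$, i.e. (H\ref{lsc}).

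The core is (H\ref{d-cont}). Fix $(A_n)_{n\ge 1}\subset\mathcal{F}$ with $\tau(\lim_n A_n)<\infty$ and $\tau(A_n)\to\tau(\lim_n A_n)$, and write $A:=\lim_n A_n$. Since $\tau(A)<\infty$ and eventually $\tau(A_n)<\infty$, (H\ref{taucompact}) makes $A$ and (eventually) the $A_n$ totally bounded, hence compact in the complete space $E$. I would then split Hausdorff convergence into its two contrasts. The contrast $\Haus(A\mid A_n)\to 0$ is the easy half: every $x\in A$ satisfies $d(x,A_n)\to 0$ by definition of the lower limit, and compactness of $A$ upgrades this to uniformity through a finite subcover. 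The reverse contrast $\Haus(A_n\mid A)\to 0$ is proved by contradiction: if it fails there exist $\eps>0$, a subsequence, and points $z_{n_k}\in A_{n_k}$ with $d(z_{n_k},A)\ge\eps$. Adjoining $z_{n_k}$ to a perturbed maximal $s$-separated subset of $A$ shows $\mathcal{M}(A_{n_k},s)\ge \mathcal{M}(A,s)+1$ for every $s<\eps$ and large $k$, so $\liminf_k \mathcal{M}(A_{n_k},\cdot)\ge \mathcal{M}(A,\cdot)+\mathbf{1}_{(0,\eps)}$; by (H'\ref{lsc}) and monotonicity of $\Phi$ this forces $\tau(A)=\lim_k\tau(A_{n_k})\ge \Phi(\mathcal{M}(A,\cdot)+\mathbf{1}_{(0,\eps)})$, and strict monotonicity (H'\ref{increasing}) should make the right-hand side strictly exceed $\tau(A)$, the sought contradiction.

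The main obstacle is precisely this last step: converting the extra packing mass carried by the far points into a \emph{strict} increase of $\tau$ \emph{inside} $\mathcal{F}$, because (H'\ref{increasing}) compares sets of $\mathcal{F}$ whereas the natural witness $A\cup\{z\}$ need not lie in $\mathcal{F}$. I would resolve this via stability: if the far points remain bounded, extract $z_{n_{k_j}}\to z$, so that $z$ belongs to the lower limit of $(A_{n_{k_j}})$; stability of $\mathcal{F}$ then yields $A^\star\in\mathcal{F}$ with $A\cup\{z\}\subseteq A^\star$ and, by (H\ref{lsc}), $\tau(A^\star)\le\tau(A)$, contradicting $A\subsetneq A^\star$ together with (H'\ref{increasing}). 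The delicate residual case is the non-proper one, where the far points escape to infinity and no limit point can be adjoined; there I would exploit that an escaping point stays $s$-separated from the packing of $A$ at arbitrarily large scales, so the packing boost persists across all scales and the strict gap supplied by (H'\ref{increasing}) and (H'\ref{lsc}) still applies. Once (H\ref{increasing}), (H\ref{lsc}) and (H\ref{d-cont}) are established, Theorem \ref{Hausdorffconsistency} applies verbatim and delivers $\Haus(\mathcal{S}^\alpha_n\mid\mathcal{S}^\alpha)\to 0$ almost surely.
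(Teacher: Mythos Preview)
Your overall strategy coincides with the paper's: verify (H\ref{increasing}), (H\ref{lsc}) and (H\ref{d-cont}) from the primed hypotheses together with (H\ref{taucompact}), and then invoke Theorem \ref{Hausdorffconsistency}. Your treatment of (H\ref{increasing}) and (H\ref{lsc}) is the paper's (the packing inequality $\mathcal{M}(\lim_n A_n,t)\le\liminf_n\mathcal{M}(A_n,t)$ is the second half of Lemma \ref{limcomp}), and your first contrast $\Haus(A\mid A_n)\to 0$ is Lemma \ref{lemHausdorff}, with (H\ref{taucompact}) supplying the finiteness of $\mathcal{M}(A,\eps)$.

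The gap is in the second contrast. You correctly flag that (H'\ref{increasing}) is stated for \emph{sets} in $\mathcal{F}$ and try to manufacture a strict superset $A^\star\in\mathcal{F}$ by extracting a limit $z$ of the far points $z_{n_k}$ and invoking stability. This works when such a limit exists, but your ``delicate residual case'' is not closed: if the $z_{n_k}$ escape to infinity there is no point to adjoin, stability applied to $(A_{n_k})$ may well return $A$ itself, and the appeal to $\Phi(\mathcal{M}(A,\cdot)+\mathbf{1}_{(0,\eps)})>\tau(A)$ (or $+1$ on all scales) begs precisely the question you were trying to avoid, since neither function is a priori the packing function of a set in $\mathcal{F}$. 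Note also that you already use monotonicity of $\Phi$ at the function level in that same line, so the scruple about (H'\ref{increasing}) is somewhat asymmetric.

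The paper avoids the whole case split. It reads (H'\ref{increasing}) as strict monotonicity of $\Phi$ and uses it \emph{once}, on the chain
\[
\tau(\lim_n A_n)=\lim_n\tau(A_n)\ \ge\ \Phi\bigl(\liminf_n\mathcal{M}(A_n,\cdot)\bigr)\ \ge\ \Phi\bigl(\mathcal{M}(\lim_n A_n,\cdot)\bigr)=\tau(\lim_n A_n),
\]
to force $\liminf_n\mathcal{M}(A_n,t)=\mathcal{M}(\lim_n A_n,t)$ for every $t>0$. With this packing-function equality in hand, Lemma \ref{contrasteHaus} (which is exactly your adjoin-a-far-point argument rephrased as the contrapositive of $\liminf_n\mathcal{M}(A_n,t)\le\mathcal{M}(A,t)$) yields $d_H(A_n,A)\to 0$ along a subsequence, with no bounded/unbounded dichotomy and no need to locate a limit of the $z_{n_k}$; the usual subsequence-of-any-subsequence argument then upgrades this to the full sequence. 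Establishing the packing-function equality first is the missing step in your route.
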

%In order to state theses conditions, we present the following definition.
%
%\begin{defn}[Strict increase]
%Let $\mathcal{F}(\mathbb{R}^+,\mathbb{N})$ denote the set of functions on $\mathbb{R}^+$ taking values in $\mathbb{N}$.
%A function $\Phi:\mathcal{F}(\mathbb{R}^+,\mathbb{N}) \longrightarrow \mathbb{R}$ is \textbf{strictly increasing} if it is increasing and if for any $f,g \in \mathcal{F}(\mathbb{R}^+,\mathbb{N})$ such that
%\[
%\forall t>0, f(t) \leq g(t) \text{ and } \exists s>0, \forall t<s, f(t)<g(t),
%\]
%$\Phi$ satisfies
%\[
%\Phi(f)< \Phi(g).
%\]
%\end{defn}
% 
%$f$ is to be understood as the packing function in our framework. The definition can thus be explained as follows: if we add one element to a compact set, its new size must be strictly greater. Indeed, adding an element $x$ to a compact set $K$ add 1 to $\mathcal{M}(K,t)$ for any $t<d(x,K)$

\begin{rem}[On (H\ref{taucompact})]
Since a non compact space can not be arbitrarily close to a finite set in Hausdorff distance, then for $B_n^\alpha$ to converge when $(\mu_n)_{n \geq 1}$ is sequence of finitely supported measures, minimizers relative to $\mu$ must be compact if $\mathcal{F}$ is rich enough to make minimizers finite sets. Minimizers relative to $\mu$ are compact when $\tau(B) < \infty$ implies that $B$ is totally bounded.
\end{rem}

\subsection{Examples of sequence of measures}

Proposition \ref{tauemp} and theorem \ref{Hausdorffconsistency} can be applied to the empirical measure and then lead to the following.

\begin{thm}[Consistency for empirical measure: i.i.d. case]\label{thm:cas_iid}
Let $(E,d)$ be a Polish space, $\mu$ a probability measure on $(E,\mathbb{B}(E))$ and $\mathcal{F}$ a stable set such that $\tau^\alpha<\infty$, for any $0 < \alpha < 1$.
Given a sample $(X_n)_{n\geq 1}$ of independent random variables with same law $\mu$, set
\[
\mu_n=\frac{1}{n}\sum_{1\leq i \leq n} \delta_{X_i},
\]
the empirical measure. Suppose (H\ref{monotony}), (H\ref{lsc}), (H\ref{taucont}) and (H\ref{d-cont}).
Then, almost surely,
\[
\Haus(\mathcal{S}^\alpha_n|\mathcal{S}^\alpha)\rightarrow 0.
\]
\end{thm}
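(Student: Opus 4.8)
The plan is to obtain this theorem as a direct specialization of the minimizer-consistency theorem \ref{Hausdorffconsistency} to the random sequence furnished by the empirical measures. That theorem already packages all of the analytic content; what remains are the two \emph{realization-dependent} hypotheses that single out the empirical setting, namely that $(\mu_n)_{n\geq 1}$ converges weakly to $\mu$ almost surely and that it is $\tau$-tight. The four structural hypotheses (H\ref{monotony}), (H\ref{lsc}), (H\ref{taucont}) and (H\ref{d-cont}) are assumed outright and are exactly those demanded by \ref{Hausdorffconsistency}, so once the two empirical facts are in hand the conclusion follows by quoting that theorem.

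First I would establish the almost sure weak convergence $\mu_n \Rightarrow \mu$. Since $(E,d)$ is Polish, hence separable, there is a countable family of bounded Lipschitz functions that is convergence-determining for the weak topology. For each such $f$ the strong law of large numbers, applied to the i.i.d.\ bounded variables $f(X_i)$, gives $\int f\,d\mu_n=\frac1n\sum_{1\leq i\leq n}f(X_i)\to\int f\,d\mu$ off a $\mu$-null event; intersecting these countably many probability-one events yields a single event of full probability on which the convergence holds simultaneously for every $f$ in the family, whence $\mu_n\Rightarrow\mu$ almost surely (Varadarajan's theorem). This is the one genuinely new analytic ingredient, and it is where separability of $E$ is used. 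The second fact is handed to us with no further work: because $\tau^\alpha<\infty$ for every $0<\alpha<1$ by assumption, Proposition \ref{tauemp} asserts precisely that the empirical sequence $(\mu_n)_{n\geq 1}$ is $\tau$-tight, with the delicate placement of ``almost surely'' (after the choice of $B$ and $\delta$) already handled in its statement.

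With almost sure weak convergence and $\tau$-tightness secured, and with (H\ref{monotony}), (H\ref{lsc}), (H\ref{taucont}) and (H\ref{d-cont}) in force, Theorem \ref{Hausdorffconsistency} applies to $(\mu_n)_{n\geq 1}$ and delivers $\Haus(\mathcal{S}^\alpha_n|\mathcal{S}^\alpha)\to 0$ almost surely. The only point that requires care is the $\mathcal{F}$-regularity assumption appearing in \ref{Hausdorffconsistency} but absent from the present statement: in that theorem $\mathcal{F}$-regularity serves only to produce the continuity (H\ref{taucont}) of $\cdot\mapsto\tau^\cdot$ at $\alpha$ through Proposition \ref{continuity} (which in turn feeds the equality of the five terms in Theorem \ref{consistency}). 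Since here (H\ref{taucont}) is assumed directly, the argument of \ref{Hausdorffconsistency} goes through verbatim, continuity giving $\tau^\alpha_n\to\tau^\alpha$ and (H\ref{d-cont}) upgrading this size convergence to the $d_\mathcal{F}$-Hausdorff-contrast convergence of the minimizer sets. I do not expect a substantive obstacle: the whole difficulty has been front-loaded into Proposition \ref{tauemp} and Theorem \ref{Hausdorffconsistency}, and the main thing to get right is the bookkeeping that reconciles the hypotheses of \ref{Hausdorffconsistency} with those assumed here, together with the standard (but essential) verification of almost sure weak convergence.
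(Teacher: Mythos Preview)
Your proposal is correct and matches the paper's approach: the paper does not give a separate proof of this theorem but presents it explicitly as a consequence of Proposition~\ref{tauemp} (\(\tau\)-tightness of the empirical measure) together with Theorem~\ref{Hausdorffconsistency}. Your additional justification of almost sure weak convergence via Varadarajan's theorem and your observation that the \(\mathcal{F}\)-regularity assumption in Theorem~\ref{Hausdorffconsistency} is redundant once (H\ref{taucont}) is assumed directly are both accurate and fill in details the paper leaves implicit.
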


Likewise, corollary \ref{cor:tau_tension} leads to convergence of empirical measure for some dependent cases.

\begin{thm}[Consistency for empirical measure: dependent case]\label{thm:cas_dependant}
Let $(E,d)$ be a Polish space and $\mu$ be a probability measure on $(E,\mathbb{B}(E))$ such that $\tau^\alpha<\infty$, for any $0 < \alpha < 1$. Given random variables $(X_n)_{n\geq 1}$ from an ergodic Markov chain with invariant measure $\mu$, denote
\[
\mu_n=\frac{1}{n}\sum_{1\leq i \leq n} \delta_{X_i},
\]
the empirical measure. Suppose (H\ref{monotony}), (H\ref{lsc}), (H\ref{taucont}), (H\ref{d-cont}) and (H\ref{taucompact}). Then, almost surely,
\[
\Haus(\mathcal{S}^\alpha_n|\mathcal{S}^\alpha)\rightarrow 0.
\]
\end{thm}

\section{Proofs}

The following lemma is the starting point of the existence of minimizer theorem.
It is also a key lemma to most of the other results.

\begin{lem}\label{limcomp}
Let $(E,d)$ be a Polish space. Let $(\mu_n)_{n \geq 1}$ be a sequence of probability measures weakly converging to $\mu_\infty$. Then,
\[
\supp \mu_\infty \subset \lim_n \supp \mu_n.
\]
Moreover, for any $t>0$
\[
\mathcal{M}(\supp \mu_\infty,t) \leq \mathcal{M}(\lim_n \supp \mu_n,t) \leq \liminf_n \mathcal{M}(\supp\mu_n,t).
\] 
\end{lem}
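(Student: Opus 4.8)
The plan is to unwind the definition of $\lim_n \supp\mu_n$ into a statement about distances, and then treat the inclusion and the two inequalities in turn. First I would record the elementary reformulation that, for any sequence of closed sets $(B_n)_{n\geq 1}$ and any $x\in E$, one has $x\in\lim_n B_n$ if and only if $d(x,B_n)\to 0$. Indeed, $x\in\bigcup_{k\geq 1}\bigcap_{n\geq k}B_n^\eps$ says exactly that $d(x,B_n)<\eps$ for all large $n$, and intersecting over $\eps>0$ is precisely the statement $d(x,B_n)\to 0$. This is the Kuratowski lower limit, and it is the form in which the two remaining parts are cleanest to prove.

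For the inclusion $\supp\mu_\infty\subset\lim_n\supp\mu_n$ I would argue by contradiction. If $x\in\supp\mu_\infty$ but $x\notin\lim_n\supp\mu_n$, then by the reformulation there is $\eps>0$ with $d(x,\supp\mu_n)\geq\eps$ for infinitely many $n$; for those $n$ the open ball $B(x,\eps)$ is disjoint from $\supp\mu_n$, so $\mu_n(B(x,\eps))=0$, and hence $\liminf_n\mu_n(B(x,\eps))=0$. Since $B(x,\eps)$ is open, the portmanteau theorem gives $\mu_\infty(B(x,\eps))\leq\liminf_n\mu_n(B(x,\eps))=0$, contradicting the fact that every open neighborhood of a support point carries positive mass. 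The first inequality of the second display is then immediate: the packing number is monotone under inclusion, since a $t$-separated subset of a smaller set is a $t$-separated subset of the larger one, so the inclusion just proved yields $\mathcal{M}(\supp\mu_\infty,t)\leq\mathcal{M}(\lim_n\supp\mu_n,t)$. This part uses no weak convergence beyond the inclusion itself.

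The substance is the final inequality $\mathcal{M}(\lim_n\supp\mu_n,t)\leq\liminf_n\mathcal{M}(\supp\mu_n,t)$, a lower-semicontinuity of the packing number along the set-limit. Writing $B_n=\supp\mu_n$ and $B_\infty=\lim_n B_n$, I would fix any integer $m\leq\mathcal{M}(B_\infty,t)$ and choose $t$-separated points $x_1,\dots,x_m\in B_\infty$, so $d(x_i,x_j)>t$ for $i\neq j$. By the reformulation, for each $i$ I can pick $y_i^{(n)}\in B_n$ with $d(x_i,y_i^{(n)})\to 0$. The triangle inequality gives $d(y_i^{(n)},y_j^{(n)})\geq d(x_i,x_j)-d(x_i,y_i^{(n)})-d(x_j,y_j^{(n)})$, whose right-hand side tends to $d(x_i,x_j)>t$; as there are only finitely many pairs, there is $N$ beyond which $d(y_i^{(n)},y_j^{(n)})>t$ for all $i\neq j$ simultaneously. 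Thus for $n\geq N$ the points $y_1^{(n)},\dots,y_m^{(n)}$ form a $t$-separated subset of $B_n$ of cardinality $m$, so $\mathcal{M}(B_n,t)\geq m$ and therefore $\liminf_n\mathcal{M}(B_n,t)\geq m$. Letting $m$ increase to $\mathcal{M}(B_\infty,t)$ (and simply sending $m\to\infty$ when $\mathcal{M}(B_\infty,t)=\infty$) finishes the proof.

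The only delicate point I anticipate is ensuring that the \emph{strict} separation $>t$ survives the passage to the approximants. This is exactly why I retain the strict inequality $d(x_i,x_j)>t$ coming from the definition of $t$-separation and perturb it by quantities tending to $0$, and why the finiteness of the index set $\{(i,j):i\neq j\}$ lets me select a single threshold $N$ that works for all pairs at once; without strictness a limiting pair could collapse to distance exactly $t$ and fail to be $t$-separated.
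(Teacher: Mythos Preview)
Your proof is correct and follows essentially the same approach as the paper's. The only cosmetic difference is that you first rephrase $x\in\lim_n B_n$ as $d(x,B_n)\to 0$ and work with converging approximants $y_i^{(n)}$, whereas the paper unfolds the definition directly and fixes a single $\delta>0$ with $\min_{i\neq j}d(x_i,x_j)>t+\delta$ before choosing points within $\delta/2$; both are the same finite-perturbation argument.
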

\begin{proof}[Proof of lemma \ref{limcomp}]
Set $t>0$ and $x \in \supp \mu_\infty$. Then, for all $\eps>0$, there exists $\eta>0$ such that $\mu_\infty(B(x,\eps/2))>\eta$. Then, using the portmanteau theorem, for $n$ large enough, $\mu_n(B(x,\eps/2))>\eta$ and thus, $x \in (\supp \mu_n)^\eps$ for any $n$ large enough, which proves the first point and first inequality.

For the second inequality, choose $m \leq \mathcal{M}(\lim_n \supp \mu_n,t)$.
Let $\{x_i\}_{1 \leq i\leq m}$ be a $t$-separated subset of $\lim_n \supp \mu_n$. Then, there exists $\delta>0$ such that 
\[
\inf_{i \neq j} d(x_i,x_j) > t+\delta.
\]
From definition of $\lim_n \supp \mu_n$ and since $m$ is finite, there exists $k \geq 1$ such that $(x_i)_{1 \geq i \geq m} \subset \cap_{n\geq k} (\supp \mu_n )^{\delta/2}$

Choose now $x_i^n \in B(x_i,\delta/2)\cap \supp \mu_n$ for $1 \leq i \leq m$.
$\{x_i^n\}_{1 \leq i\leq m}$ forms a $t$-separated set included in $\supp \mu_n$ since for $i \neq j$
\begin{align*}
d(x_i^n,x_j^n) &\geq d(x_i,x_j)-d(x_i,x_i^n)-d(x_j,x_j^n)\\
& > t +\delta -\delta/2 - \delta/2 =t.
\end{align*}
Thus, $m \leq \liminf_n \mathcal{M}( \supp\mu_n,t)$, for any $m \leq \mathcal{M}( \lim_n \supp \mu_n,t)$, 
which ends the proof.
\end{proof}

\begin{proof}[Proof of theorem \ref{existence}]
Let $(B_n)_{n \geq 1}\subset \mathcal{F}$ be a minimizing sequence such that $\mu(B_n)\geq 1-\alpha$ for all $n\geq 1$. Set $\mu_n=\mu(.|B_n)$. Let first show that $(\mu_n)_{n\geq 1}$ is tight. Let $K$ be a compact set such that $\mu(K)\geq 1-\eps(1-\alpha)$, then
\begin{align*}
\mu_n(K)&=\mu(K\cap B_n)/\mu(B_n)\\
&= (1-\mu(K^c\cup B_n^c))/\mu(B_n)\\
&\geq 1-\mu(K^c)/\mu(B_n)\\
&\geq 1-\eps.
\end{align*}
Thus, up to a subsequence, $(\mu_n)_{n\geq 1}$ weakly converges to $\mu_\infty$. Set $B=\lim_n \supp \mu_n$. $B^\eps$ is an open set and contains $\supp \mu_\infty$ from lemma \ref{limcomp}. Then, portmanteau theorem gives
\begin{align*}
1 = \mu_\infty(B^\eps) \longleftarrow \mu_n(B^\eps) & = \mu(B^\eps \cap B_n)/\mu(B_n)\\
&\leq \mu(B^\eps)/(1-\alpha).
\end{align*}
Letting $\eps$ tends to zero and using $\bigcap_{\eps>0} B^\eps=B$, it comes $\mu(B)\geq 1-\alpha$.

Proof ends using (H\ref{lsc}) and stability of $\mathcal{F}$.
\end{proof}

\begin{proof}[Proof of proposition \ref{rightcont}]
Clearly, $\alpha \mapsto \tau^\alpha$ decreasing, and we thus only need to show $\tau^{\alpha}\geq \lim_{\eps \rightarrow 0^+} \tau^{\alpha+\eps}$.
Set $B_\alpha \in \argmin \{\tau(A); A \in \mathcal{F}, \mu(A)\geq 1-\alpha\}$ and $\mu^\alpha=\mu(.|B_\alpha)$. For any $(\alpha_n)_{n \geq 1} \subset (0;1)$ decreasing and converging to $\alpha$, $(\mu^{\alpha_n})_{n\geq 1}$ is tight. Indeed, for a compact set $K$ such that $\mu(K) \geq 1-\eps(1-\alpha_1)$,
\begin{align*}
\mu^{\alpha_n}(K) &= \mu(K\cup B_{\alpha_n})/\mu(B_{\alpha_n})\\
&\geq (1-\mu(K^c)-\mu(B_{\alpha_n}^c)/\mu(B_{\alpha_n})\\
&=1-\mu(K^c)/\mu(B_{\alpha_n})\\
&\geq 1-\eps.
\end{align*}
Thus, up to a subsequence, $\mu^{\alpha_n}$ converges to some probability measure $\mu_\infty$. Set $B_\infty= \lim_n\supp \mu_n$. $B_\infty^\eps$ is an open set that contains $\supp \mu_\infty$, and thus portmanteau theorem yields, 
\begin{align*}
1 = \mu_\infty(B_\infty^\eps) \longleftarrow& \mu^{\alpha_n}(B_\infty^\eps) \\
= &\mu(B_\infty^\eps \cap B_{\alpha_n})/\mu(B_{\alpha_n})\\
\leq &\mu(B_\infty^\eps)/(1-\alpha_n)\longrightarrow  \mu(B_\infty^\eps)/(1-\alpha).
\end{align*}
Letting $\eps$ tends to zero shows $\mu(B_\infty) \geq 1-\alpha$.
Hypothesis (H\ref{lsc}) and stability of $\mathcal{F}$ let us conclude
\[
\tau^\alpha \leq \tau(B_\infty) \leq \liminf_n \tau(B_{\alpha_n})=\lim_n \tau(B_{\alpha_n}).
\]
\end{proof}

In order to show continuity of the size function (proposition \ref{continuity}), we establish a lemma that states that for a connected measure, any $\eps$-neighborhood of Borel set has strictly more mass than the original Borel set.

\begin{lem}\label{connected}
Let $\mu$ be a probability measure with a connected support, then for any Borel set $A$ such that $\mu(A)<1$ and any $\eps >0$, the $\eps$-neighborhood of $A$ satisfies
\[
\mu(A^\eps) > \mu(A).
\]
\end{lem}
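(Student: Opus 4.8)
The plan is to argue by contradiction, converting the purely measure-theoretic hypothesis into a topological splitting of the support. Since $A \subseteq A^\eps$ (taking $y=x$ in the definition of the neighborhood), monotonicity of $\mu$ already gives $\mu(A^\eps) \geq \mu(A)$, so it suffices to exclude equality. I would therefore assume $\mu(A^\eps) = \mu(A)$ and introduce the distance function $f(x) := \inf_{y \in A} d(x,y)$, which is $1$-Lipschitz, hence continuous, and satisfies $A^\eps = \{f < \eps\}$ and $\overline{A} = \{f = 0\}$. Writing $S := \supp \mu$, the goal is to produce a nontrivial partition of $S$ into two relatively clopen sets.

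The crucial first step is to show that the open ``annular'' region $\{0 < f < \eps\}$ is $\mu$-null. Indeed $\{0 < f < \eps\} = A^\eps \setminus \overline{A} \subseteq A^\eps \setminus A$, and the contradiction hypothesis gives $\mu(A^\eps \setminus A) = \mu(A^\eps) - \mu(A) = 0$. This region is the intersection of the two open sets $\{f < \eps\}$ and $\{f > 0\}$, so it is itself open; since an open set of measure zero can contain no point of $\supp \mu$, I conclude $S \cap \{0 < f < \eps\} = \emptyset$. In other words, every point of the support satisfies either $f = 0$ or $f \geq \eps$, with no intermediate values occurring on $S$.

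This dichotomy splits $S$ as the disjoint union of $S \cap \{f = 0\}$ and $S \cap \{f > 0\}$, and the decisive observation is that both pieces are relatively open in $S$. The second is $S \cap \{f > 0\}$ with $\{f > 0\}$ open; the first equals $S \cap \{f < \eps\}$ precisely because $S$ omits the annulus, and $\{f < \eps\}$ is open. Being complementary in $S$, each is also relatively closed, hence relatively clopen. To reach a contradiction with connectedness of $S$ it remains to verify that neither piece is empty, and this is where the measure hypotheses enter: from $\overline{A} \subseteq A^\eps$ together with the assumed equality one gets $\mu(\overline{A}) = \mu(A)$, whence $\mu(S \cap \{f > 0\}) = \mu(\{f > 0\}) = 1 - \mu(\overline{A}) = 1 - \mu(A) > 0$ using $\mu(A) < 1$, so $S \cap \{f > 0\} \neq \emptyset$, while $\mu(S \cap \{f = 0\}) = \mu(\overline{A}) = \mu(A) > 0$ forces $S \cap \{f = 0\} \neq \emptyset$. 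Thus $S$ would be disconnected, contradicting the hypothesis and proving $\mu(A^\eps) > \mu(A)$.

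The main obstacle, and the only genuinely delicate point, is the second paragraph: recognizing that the measure-zero annulus is \emph{open} and must therefore avoid the support entirely. This is exactly what upgrades a statement about $\mu$-mass into the topological separation of $S$. I would also flag one degenerate caveat in the final nonemptiness check: to guarantee $S \cap \{f = 0\} \neq \emptyset$ one needs $A$ to meet the support (equivalently $\mu(A) > 0$, since $A \cap S = \emptyset$ would force $\mu(A) = 0$ and the conclusion can fail, e.g. for $A$ disjoint from $S$). This is automatic in every application of the lemma here, where the relevant sets carry mass at least $1-\alpha$.
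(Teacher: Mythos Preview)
Your proof is correct and follows essentially the same contradiction strategy as the paper: both assume $\mu(A^\eps)=\mu(A)$, observe that an open ``annular'' region carries no mass and hence misses the support, and deduce a nontrivial clopen splitting of $\supp\mu$. The paper packages this with the two open sets $A^{\eps/3}$ and $((A^\eps)^c)^{\eps/3}$, whose closures are disjoint and whose union has full measure; your use of the Lipschitz distance function $f$ and the level sets $\{f=0\}$, $\{f\geq\eps\}$ is an equivalent and slightly cleaner formulation of the same split.

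One point worth noting: you correctly flag the degenerate case $\mu(A)=0$, in which the lemma as stated can fail (take $A$ at positive distance from $\supp\mu$). The paper's proof has the same tacit gap---if $\mu(A)=0$ then $\overline{A^{\eps/3}}$ need not meet the support---but does not mention it. As you observe, every application in the paper has $\mu(A)\geq 1-\alpha>0$, so the issue is harmless in context.
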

\begin{proof}[Proof of lemma \ref{connected}]
In order to show a contradiction, suppose $\mu(A^\eps)=\mu(A)$ for some $\eps >0$. Then, $\mu(A^{\eps/3})=\mu(A)$ and $\mu(((A^\eps)^c)^{\eps/3}) \geq \mu((A^\eps)^c)=1-\mu(A)$ so that $\mu(A^{\eps/3} \cup ((A^\eps)^c)^{\eps/3})=1$. Since $\overline{A^{\eps/3}} \cap \overline{((A^\eps)^c)^{\eps/3}}= \emptyset$ and $\overline{A^{\eps/3}}$ and $\overline{((A^\eps)^c)^{\eps/3}}$ are closed, the support of $\mu$ not connected, which contradicts the hypothesis.
\end{proof}

We can now prove continuity of the smallest size function.

\begin{proof}[Proof of proposition \ref{continuity}]
By proposition \ref{rightcont} it is enough to show that $\alpha \mapsto \tau^\alpha$ is left continuous. Moreover, since $\alpha \mapsto \tau^\alpha$ is decreasing, it is enough to prove $\lim_{\eps\rightarrow 0^+} \tau ^{\alpha-\eps} \leq \tau^\alpha$. 

Let $\alpha>0$ and $B\in \mathcal{F}$ such that $\tau^\alpha=\tau(B)$. If $\mu(B)>1-\alpha$ then the result is obvious.
Suppose then $\mu(B)=1-\alpha$. By lemma \ref{connected}, for any $\delta>0$ there exists $\eps>0$ such that $\mu(B^\delta)\geq 1-\alpha+\eps$.
Define a decreasing sequence $(\delta_n)_{n \geq 1}$ converging to zero, such that $\mu(B^{\delta_n}) > 1-\alpha$ for any $n \geq 1$.
Since $\mu$ is $\mathcal{F}$-regular, there exists $(K_n)_{n\geq 1} \subset \mathcal{F}$ such that for any $n\geq 1$
\begin{align*}
&\tau(K_1)<\infty, \text{ since } \tau^{\alpha-\eps}<\infty, \\
&K_n \subset B^{\delta_n} \cap K_{n-1} ,\\
&1-\alpha<\mu(K_n) =: 1-\alpha_n.
\end{align*}
Clearly, $\alpha_n \rightarrow \alpha$.
Then, %since $\mathcal{M}(K_n,t) \leq \mathcal{M}(B,t-2\delta_n)$, 
using (H\ref{increasing}) and (H\ref{monotony}), it yields
\[
\lim_{\eps \rightarrow 0^+} \tau^{\alpha-\eps} \longleftarrow \tau^{\alpha_n} \leq \tau(K_n) \longrightarrow \tau(\cap_n K_n)= \tau^\alpha.
\]
\end{proof}

\begin{proof}[Proof of proposition \ref{tauemp}]
Since $\tau^\alpha$ is finite, for any $0<\alpha<1$, there exists $B \in \mathcal{F}$ such that $\tau(B) < \infty$ and $\mu(B)\geq 1-\alpha$.
For such $B$, the law of large number states that almost surely $\lim_n\mu_n(B)=\mu(B)$.
Set $\delta>0$. Then for any $C\in \mathcal{F}$ such that $B\subset C$ and $ \mu(B)\leq\liminf_n \mu_n(C)$, for $A:=B \in \mathcal{F}$ the three following conditions are clearly true
\begin{align*}
\mu(B) \leq \liminf_n \mu_n(A), \\
B \subset A \subset B^\delta \cap C,\\
\tau(A)<\infty.
\end{align*}
\end{proof}

\begin{proof}[Proof of theorem \ref{consistency}]
Let $K$ be a compact set such that $\mu_n(K)\geq 1-\eps(1-\alpha)$ for any $n\geq 1$ and $\mu(K)\geq 1-\eps(1-\alpha)$. Then, 
\begin{align*}
\mu_n^\alpha(K) &= \mu_n(K\cap B_n^\alpha)/\mu_n(B_n^\alpha)\\
&\geq 1-\eps,
\end{align*}
showing tightness of $(\mu_n^\alpha)_{n\geq 1}$.
Denote $\mu_\infty^\alpha$ a limit measure and $B_\infty^\alpha = \lim_n \supp \mu_n^\alpha$.
Set $\eps=\inf\{\eta >0 ; \mu_n(B) \leq \mu(B^\eta) + \eta, \forall B \in \mathbb{B}(E)\}$ the Prokhorov distance between $\mu$ and $\mu_n$.
Then using together the portmanteau theorem and the fact that the topology induced by the Prokhorov metric coincides with the weak convergence topology, for any $\eta>0$ there exists $n_\eta$ such that for all $n>n_\eta$ - since $\supp \mu_\infty^\alpha \subset B_\infty^\alpha$ from lemma \ref{limcomp},
\begin{align*}
1 &= \mu_\infty^\alpha (B_\infty^\alpha)\\
&\leq \mu_n^\alpha ((B_\infty^\alpha)^\eps|B_n^\alpha) + \eta\\
&=\mu_n((B_\infty^\alpha)^\eps\cap B_n^\alpha)/\mu_n(B_n^\alpha) + \eta\\
&\leq \mu_n((B_\infty^\alpha)^\eps)/(1-\alpha)+ \eta\\
&\leq (\mu((B_\infty^\alpha)^{2\eps})+\eps)/(1-\alpha) + \eta \longrightarrow \mu(B_\infty^\alpha)/(1-\alpha).
\end{align*}
Thus, $\mu(B_\infty^\alpha) \geq 1-\alpha$ and thus by definition of $\tau^\alpha$,
\[
\tau^\alpha \leq \tau(B_\infty^\alpha).
\] 

The second inequality is a direct application of (H\ref{lsc}).

Let us relabel a subsequence of $(\mu_n)_{n \geq 1}$ so that $\limsup_{n \rightarrow \infty} \tau(B_n^\alpha) = \lim_{n \rightarrow \infty} \tau(B_n^\alpha)$.
Choose $\eps>0$, and $B_\eps \in \argmin\{\tau(A); A \in \mathcal{F}, \mu(A)\geq 1-\alpha +\eps\}$. Without loss of generality, we can suppose that $\tau(B_\eps)<\infty$. Since the sequence $(\mu_n)_{n\geq1}$ converges to $\mu$, the portmanteau theorem states that for any $\delta >0$, 
\[
\liminf_n\mu_n(B_\eps^\delta)\geq \mu(B_\eps).
\]
Set $\delta_1>0$. Then, since $E\in \mathcal{F}$, $\tau$-tightness implies that there exists $A_1 \in \mathcal{F}$ such that
\begin{align*}
\tau(A_1)< \infty,\\
B_\eps \subset A_1 \subset B_\eps^{\delta_1},\\
1-\alpha +\eps \leq \mu(B_\eps) \leq \liminf_n\mu_n(A_1), \text{ a.s.}.
\end{align*}

Let $(\delta_k)_{k\geq 1}$ be a decreasing sequence converging to zero.
Using recursively the $\tau$-tightness property of the sequence $(\mu_n)_{n\geq 1}$, with sets $A_{k-1}$, define a decreasing sequence $(A_k)_{k\geq 1}$ of elements of $\mathcal{F}$, such that $A_k \subset B_\eps^{\delta_k}$ for any $k\geq 1$, and
\begin{align*}
B_\eps \subset A_k \subset B_\eps^{\delta_k},\\
1-\alpha +\eps \leq \mu(B_\eps) \leq \liminf_n\mu_n(A_k) \text{ a.s.}.
\end{align*}
Then, for any $k\geq 1$, there exists $n_k \geq 1$ such that $\mu_{n}(A_k) \geq 1-\alpha$ for any $n>n_k$. Then, since $A_k \in \mathcal{F}$, $\tau(B_n^\alpha) \leq \tau(A_k)$ for all $n\geq n_k$.
By construction and since $B_\eps$ is closed, $\cap_k A_k = B_\eps$.
Using (H\ref{increasing}) and (H\ref{monotony}) yields
\[
\tau(B^\alpha_n) \leq \tau(A_k) \longrightarrow_k \tau(\cap_k A_k) = \tau(B_\eps),
\]
which shows that for any $\eps >0$,
\[
\limsup_n \tau(B_n^\alpha) \leq \tau^{\alpha-\eps}.
\]

The inequality of the five terms is a direct application of proposition \ref{continuity}.
\end{proof}

\begin{proof}[Proof of theorem \ref{Hausdorffconsistency}]
Choose $B_n^\alpha \in \mathcal{S}_n^\alpha$. Set $\mu_n^\alpha = \mu_{n|B_n^\alpha}$. Theorem \ref{consistency}, lemma \ref{limcomp} and continuity condition (H\ref{taucont}) of $\cdot \mapsto \tau^\cdot$ shows that $(\mu_n^\alpha)_{n \geq 1}$ is totally bounded for the weak convergence topology and that $B_\infty^\alpha:=\lim_{k} B_{n_k} \in \mathcal{S}^\alpha$ where $(n_k)_{k \geq 1}$ is a subsequence along which $\mu_n^\alpha$ converges.
%
%Theorem \ref{consistency} and (H\ref{taucompact}) ensures $\mathcal{M}(B_\infty^\alpha,\eps)<\infty$ for all $\eps>0$.
%Lemma \ref{lemHausdorff} then shows along a subsequence
%\[
%\Haus(B^\alpha_\infty | B_n^\alpha) \rightarrow 0.
%\]
%Then lemma \ref{limcomp} states
%\[
%\forall t>0, \mathcal{M}(B_\infty^\alpha,t) \leq \liminf \mathcal{M}(B_n^\alpha,t),
%\]
%while theorem \ref{consistency} together with continuity condition (H\ref{taucont}) of $\cdot \mapsto \tau^{\cdot}$ and (H\ref{lsc}) prove
%\[
%\Phi(\mathcal{M}(B_\infty^\alpha,.)) = \Phi(\liminf \mathcal{M}(B_n^\alpha,.)).
%\]
%Then, since $\Phi$ is strictly increasing (H\ref{sincreasing}), for any $s>0$, there exists $t<s$ such that
%\[
%\mathcal{M}(B_\infty^\alpha,t) = \mathcal{M}(\lim_n B_n^\alpha,t) = \liminf \mathcal{M}(B_n^\alpha,t).
%\]
%Lemma \ref{contrasteHaus} then applies and gives
Theorem \ref{consistency} and (H\ref{d-cont}) then yield
\begin{equation}\label{firstfirstdH}
d_H(B^\alpha_\infty , B_n^\alpha) \rightarrow 0,
\end{equation}
for the subsequence of $(B_n)_{n \geq 1}$.
Since (\ref{firstfirstdH}) holds for a subsequence of any subsequence of the original sequence $(B_n^\alpha)_{n \geq 1}$, taking $B_n^\alpha \in \mathcal{S}^\alpha_n$
so that $1/n+d_H(B_n^\alpha,B_\infty^\alpha) \geq \Haus(\mathcal{S}^\alpha_n|\mathcal{S}^\alpha)$ for any $B_\infty^\alpha \in \mathcal{S}^\alpha$, it follows
\[
\Haus(\mathcal{S}^\alpha_n|\mathcal{S}^\alpha)\rightarrow 0.
\]
\end{proof}

\begin{proof}[Proof of proposition \ref{contHaus}]
Choose a sequence $(\alpha_n)_{n \geq 1}$ converging to $\alpha$.
From continuity of the size function, for any $n$ large enough, $\tau^{\alpha_n} < \infty$. %  and so (H\ref{taucompact}) implies compactness of any element of $\mathcal{S}^{\alpha_n}$. 
Define $\mu_n=\mu(.|B_n)$ for any $B_n \in \mathcal{S}^{\alpha_n}$. Sequence $(\mu_n)_{n\geq 1}$ is tight.
Indeed, let $K$ be a compact set such that $\mu(K)\geq 1-\eps(1-\max_{n \geq 1} \alpha_n)$, then
\begin{align*}
\mu_n(K)&=\mu(K\cap B_n)/\mu(B_n)\\
&= (1-\mu(K^c\cup B_n^c))/\mu(B_n)\\
&\geq 1-\mu(K^c)/\mu(B_n)\\
&\geq 1-\eps.
\end{align*}
Set $B_\infty=\lim_k B_{n_k}$  with $(n_k)_{k \geq 1}$ a subsequence along which $(\mu_n)_{n \geq 1}$ converges (such subsequence exists since $(\mu_n)_{n\geq 1}$ is tight). Set $\mu_\infty = \lim_k \mu_{n_k}$. Using similar arguments to those in theorem \ref{existence}, we can show that $\mu(B_\infty) \geq 1 - \alpha$. Thus hypothesis (H\ref{lsc}) yields
\begin{equation}\label{contcons}
\tau^\alpha \leq \tau(B_\infty) \leq \liminf \tau(B_n) = \lim \tau^{\alpha_n} = \tau^\alpha,
\end{equation}
where the last equality is due to the continuity condition (H\ref{taucont}).
%Applying lemma \ref{lemHausdorff} to a converging subsequence of $(\mu_n)_{n\geq1}$ shows along this subsequence
%\[
%\Haus(B_\infty|B_n) \rightarrow 0.
%\]
%However, the second inequality - which is in fact an equality - of (\ref{contcons}) together with the strict increase condition on $\Phi$ (H\ref{sincreasing}) implies that for any $s>0$ there exists $t<s$ such that
%\[
%\mathcal{M}(B_\infty,t)= \mathcal{M}(\lim_n B_n,t) =\liminf\mathcal{M}(B_n,t).
%\]
%Lemma \ref{contrasteHaus} thus gives
(H\ref{d-cont}) implies
\begin{equation}\label{firstdH}
d_\mathcal{F}(B_\infty,B_n) \rightarrow 0.
\end{equation}
Remark then that corollary \ref{corHausdorffconsistency} establishes that $\mathcal{S}^{\alpha_n}$ is compact for the Hausdorff metric topology, with underlying metric $d_\mathcal{F}$. Since (\ref{firstdH}) holds for a subsequence of any subsequence of the original sequence $(B_n)_{n \geq 1}$, taking
\[
B_n \in \argmax\{\min_{B \in \mathcal{S}^\alpha}d_H(B,A)|A\in \mathcal{S}^{\alpha_n}\},
\]
so that $d_H(B_n,B_\infty) \geq \Haus(\mathcal{S}^{\alpha_n}|\mathcal{S}^\alpha)$ for any $B_\infty \in \mathcal{S}^\alpha$ yields
\[
\Haus(\mathcal{S}^{\alpha_n} | \mathcal{S}^\alpha)\rightarrow 0.
\]
\end{proof}

\begin{proof}[Proof of corollary \ref{corHausdorffconsistency}]
Set $(\mu_n)_{n\geq 1}=(\mu)_{n\geq1}$ in the theorem $\ref{Hausdorffconsistency}$ and the result is then a direct application of (\ref{firstfirstdH}) from the proof of the proposition.
\end{proof}

\begin{proof}[Proof of corollary \ref{thm:packing}]
In order to prove this result, we show that (H'\ref{increasing}), (H'\ref{lsc}) and (H\ref{taucompact}) all together imply (H\ref{increasing}), (H\ref{lsc}) and (H\ref{d-cont}).
Thus, suppose (H'\ref{increasing}), (H'\ref{lsc}) and (H\ref{taucompact}).
Then, (H\ref{increasing}) and (H\ref{lsc}) are obviously true.

Now, in order to show  (H\ref{d-cont}), choose  a sequence $(A_n)_{n\geq 1} \subset \mathcal{F}$ such that $\tau(\lim_n A_n)<\infty$, and $\lim\tau(A_n) = \tau(\lim A_n)$. Then, lemma \ref{lemHausdorff} implies that 
\[
H(\lim_kA_k|A_n) \rightarrow 0.
\]
Moreover,
\begin{align*}
\lim_n \tau(A_n) & = \lim_n \Phi(\mathcal{M}(A_n,.))\\
& \geq \Phi(\liminf \mathcal{M}(A_n,.) \text{ using  (H'\ref{lsc})}\\
& \geq \Phi(\mathcal{M}(\lim_nA_n,.)) \text{ using lemma \ref{limcomp}}\\
& = \tau(\lim A_n).
\end{align*}
Thus, since we supposed $\lim\tau(A_n) = \tau(\lim A_n)$, and because $\Phi$ is supposed to be strictely increasing by (H'\ref{increasing}), lemma \ref{limcomp} shows
\[
\liminf\mathcal{M}(A_n,t)=\mathcal{M}(\lim A_n,t), \forall t>0.
\]
Lemma \ref{contrasteHaus} can thus be applied to show that, along a subsequence,
\[
d_H(A_n,\lim_k A_k)\rightarrow 0.
\]
Since this is true for any subsequence, it holds for the sequence itself, i.e. (H\ref{d-cont}) holds.

The result is then an application of theorem \ref{Hausdorffconsistency}.
\end{proof}

%Proof of theorem \ref{Hausdorffconsistency} is based on the following two lemmas.

\begin{lem}\label{lemHausdorff}
Let $(E,d)$ be a Polish space and $(B_n)_{n \geq 1}$ be a sequence of set of $E$. Then, for any $\eps>0$ such that $\mathcal{M}(\lim_kB_k,\eps) < \infty$, there exists $n_0 \in \mathbb{N}$ such that for all $n>n_0$,
\[
\lim_k B_k \subset B_n^{2\eps}.
\]
\end{lem}
\begin{proof}[Proof of lemma \ref{lemHausdorff}]
Set $\eps>0$ and $m= \mathcal{M}(\lim_k B_k,\eps)$.
Let $\{x_i\}_{1 \leq i\leq m}$ be a maximal $\eps$-separated subset of $\lim_n B_n$. Then, by definition of $\lim_k B_k$ and since $m$ is finite, for any $\delta>0$, there exists $k_\delta$ such that for all $n\geq k_\delta$,
\[
\{x_i\}_{1 \leq i\leq m} \subset B_n^\delta.
\]
Maximality of $\{x_i\}_{1 \leq i\leq m}$ ensures
\[
\lim_k B_k \subset \bigcup_{1 \leq i \leq m} \overline{B(x_i,\eps)} \subset B_n^{\eps+\delta}.
\]
Proofs ends when choosing $\delta \leq \eps$.
\end{proof}

\begin{lem}\label{contrasteHaus}
Let $(E,d)$ be a metric space. Let $(B_n)_{n\geq 1}$ be a sequence of closed sets such that $\Haus(B|B_n) \rightarrow 0,$ for some closed set $B$.
Suppose that for any $s >0$, there exists $0<t< s$ such that
\begin{equation}\label{Mhypothesis}
\liminf\mathcal{M}(B_n,t) \leq \mathcal{M}(B,t) < \infty.
\end{equation}
Then along a (relabeled) subsequence $(B_n)_{n \geq 1}$,
\[
d_H(B_n,B) \rightarrow 0.
\]
\end{lem}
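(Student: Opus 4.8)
The plan is to work at the level of the two Hausdorff contrasts. Since $d_H(B_n,B)=\Haus(B_n|B)\vee\Haus(B|B_n)$ and $\Haus(B|B_n)\to 0$ is assumed for the whole sequence, it suffices to produce a subsequence along which $\Haus(B_n|B)\to 0$ as well; concretely, I would show that $\liminf_n \Haus(B_n|B)=0$. Any subsequence realizing this $\liminf$ then satisfies both $\Haus(B_n|B)\to 0$ and (inherited from the full sequence) $\Haus(B|B_n)\to 0$, so relabeling gives $d_H(B_n,B)\to 0$. Thus the entire argument reduces to the single claim $\liminf_n \Haus(B_n|B)=0$.

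I would prove this claim by contradiction. Assume $\liminf_n \Haus(B_n|B)>0$; then there are $\eta>0$ and $N$ with $\Haus(B_n|B)>\eta$ for every $n\geq N$, so each such $B_n$ contains a point $y_n$ with $d(y_n,B)\geq\eta$. The important feature is that this ``bad'' point exists for \emph{all} large $n$, not merely along a subsequence, which is exactly what will let me contradict a $\liminf$ of packing numbers taken over the full sequence. Now invoke the packing hypothesis with $s=\eta$: it yields a scale $0<t<\eta$ such that $\liminf_n \mathcal{M}(B_n,t)\leq \mathcal{M}(B,t)=:m<\infty$. Choosing $s=\eta$ is what forces the scale $t$ to lie below the separation $\eta$ of the bad points, which is the crux of the mechanism.

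The next step builds an oversized packing of $B_n$. Fix a maximal $t$-separated family $\{x_1,\dots,x_m\}\subset B$ and set $\rho=\min_{i\neq j}d(x_i,x_j)-t>0$. Using $\Haus(B|B_n)\to 0$, take $n$ large enough that $B\subset B_n^\delta$ with $\delta<\min(\rho/2,(\eta-t)/2)$, and pick shadows $x_i^n\in B_n$ with $d(x_i,x_i^n)<\delta$. The triangle inequality keeps the shadows strictly $t$-separated, $d(x_i^n,x_j^n)\geq d(x_i,x_j)-2\delta>t$, while $d(y_n,x_i^n)\geq d(y_n,x_i)-d(x_i,x_i^n)>\eta-\delta>t$ shows $y_n$ is strictly $t$-separated from each of them. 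Hence $\{x_1^n,\dots,x_m^n,y_n\}$ is a $t$-separated subset of $B_n$ of cardinality $m+1$, so $\mathcal{M}(B_n,t)\geq m+1$ for all large $n$, giving $\liminf_n \mathcal{M}(B_n,t)\geq m+1>m$ and contradicting the packing hypothesis. This contradiction establishes $\liminf_n \Haus(B_n|B)=0$ and completes the proof.

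The main obstacle I anticipate is the uniform bookkeeping in the last step rather than any conceptual difficulty: I must guarantee that the inequalities are \emph{strict} (the packing definition requires distances strictly greater than $t$), which is why $\delta$ is taken strictly below both $\rho/2$ and $(\eta-t)/2$, and I must ensure the bad point $y_n$ is available simultaneously with the shadows $x_i^n$ for all large $n$. The latter is precisely why I phrase the contradiction through $\liminf_n \Haus(B_n|B)>0$ (forcing $y_n$ for every large $n$) instead of extracting a subsequence first, so that the resulting bound $\mathcal{M}(B_n,t)\geq m+1$ genuinely conflicts with the full-sequence $\liminf$ supplied by the packing hypothesis.
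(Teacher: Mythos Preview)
Your proof is correct and follows essentially the same route as the paper: both argue by contradiction that $\liminf_n \Haus(B_n|B)=0$, produce a ``bad'' point $y_n\in B_n$ at distance $\geq\eta$ from $B$, transplant a maximal packing of $B$ into $B_n$ via $\Haus(B|B_n)\to 0$, and obtain $\mathcal{M}(B_n,t)\geq m+1$ for all large $n$. The only cosmetic difference is how the slack for strict separation is manufactured: the paper invokes right-continuity of $t\mapsto\mathcal{M}(B,t)$ to pass to a $(t+\delta)$-separated set of the same cardinality $m$, whereas you exploit finiteness of the $t$-packing directly via $\rho=\min_{i\neq j}d(x_i,x_j)-t>0$; both devices achieve the same end.
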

\begin{proof}[Proof of lemma \ref{contrasteHaus}]
It suffices to show
\[
\forall \eps > 0, \forall n_0 \in \mathbb{N}, \exists n\geq n_0, B_n \subset (B)^\eps.
\]
In order to prove a contradiction, suppose
\begin{equation}\label{falsestatement}
\exists \eps > 0, \exists n_0 \in \mathbb{N}, \forall n\geq n_0, B_n \not\subset (B)^\eps.
\end{equation}
Then,
\[
\exists \eps>0, \exists n_0 \in \mathbb{N}, \forall n>n_0, \exists x_n \in B_n, \forall y \in B, d(x_n,y)>\eps.
\]
Set $t<\eps$ and $m=\mathcal{M}(B,t)$. Since $\mathcal{M}(B,.)$ is right continuous, there exists $\delta > 0$ such that 
\[
m=\mathcal{M}(B,t) = \mathcal{M}(B,t+\delta).
\]
Let $\{z_i\}_{i \leq m}$ be a maximal $(t+\delta)$-separated subset of $B$.
Then, since for any $\eta>0$ and $n$ large enough,
\[
B \subset (B_n)^{\eta},
\]
we can construct $\{y_i\}_{i \leq m} \subset B_n$ such that for all $1 \leq i \leq m$, $d(y_i,z_i)\leq \eta$.
Thus, for any $i \neq j$, 
\[
d(y_i,y_j) \geq d(z_i,z_j)-d(z_i,y_i)-d(z_j,y_j) > t+\delta - 2 \eta.
\]
Choose then $\eta < \delta/2 \wedge (\eps-t)$ such that $\{y_i\}_{i \leq m}$ is a $t$-separated subset of $B_n$.
Then, $\{x_n\}\cup \{y_i\}_{i \leq m}$ is also a $t$-separated subset of $B_n$ since $x_n \in B_n$ and
\[
d(x_n,y_i) \geq d(x_n,z_i) -d(z_i,y_i)>\eps - \eta > t,
\]
showing thus $\mathcal{M}(B,t) +1 \leq \mathcal{M}(B_n,t)$, for any $n$ large enough. This contradicts (\ref{Mhypothesis}) and thus shows that (\ref{falsestatement}) is false.
\end{proof}

%These two lemmas applied one after the other lead the theorem \ref{Hausdorffconsistency}.

\begin{lem}\label{lem:stable}
Let $\mathcal{F}$ be a stable class of connected sets of a metric space $(E,d)$ such that bounded sets are compact. Suppose that there exists an increasing union $(K_k)_{k \geq 1}$ of balls verifying $\cup_{k \geq 1} K_k=E$ such that for all $k\geq 1$ and any $F \in \mathcal{F}$, $K_k\cap F$ is still connected then
\[
\mathcal{F}_\eps := \{ \cup_{F \in \mathcal{G}} F ; \mathcal{G} \subset \mathcal{F}, \forall F,G \in \mathcal{G} \inf_{x \in F, y \in G} d(x,y) \geq \eps \},
\]
is also a stable class.
\end{lem}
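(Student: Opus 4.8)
The plan is to verify the two requirements of Definition \ref{def:ensemblestable} for the class $\mathcal{F}_\eps$. First, $E \in \mathcal{F}_\eps$: since $\mathcal{F}$ is stable we have $E \in \mathcal{F}$, and the singleton family $\mathcal{G}=\{E\}$ satisfies the separation condition vacuously, so $E = \cup_{F \in \mathcal{G}} F \in \mathcal{F}_\eps$. The content is the compactness property: given a sequence $(B_n)_{n\geq 1}\subset \mathcal{F}_\eps$, say $B_n = \cup_{F \in \mathcal{G}_n} F$ with $\mathcal{G}_n \subset \mathcal{F}$ pairwise $\eps$-separated, I must extract a subsequence whose set-limit lies in $\mathcal{F}_\eps$. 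The first key observation is a uniform finiteness bound: fix a ball $K_k$, which is bounded hence compact. Picking one point of $F\cap K_k$ in each piece $F\in\mathcal{G}_n$ meeting $K_k$ yields a subset of $K_k$ whose points are pairwise at distance $\geq \eps$, since distinct pieces are $\eps$-separated. Total boundedness of $K_k$ thus bounds the number of pieces of $\mathcal{G}_n$ meeting $K_k$ by $N_k := \mathcal{M}(K_k,\eps/2) < \infty$, uniformly in $n$.

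With this bound I would run the diagonal extraction of remark \ref{rem:totalborne}. Since the closed subsets of a compact ball $K_{k'}$ form a compact space for the Hausdorff metric, a diagonal argument produces a subsequence (relabeled $(B_n)$) along which $B_n \cap K_{k'} \to B_\infty \cap K_{k'}$ in Hausdorff metric for every $k'$, where $B_\infty := \lim_n B_n$ (remarks \ref{rem:totalborne} and \ref{rem:generalizedH}), and along which, for each $k$, the number of pieces meeting $K_k$ is a constant $p_k \leq N_k$ with the $K_{k'}$-restriction of each such piece converging in Hausdorff metric. Because two distinct pieces are $\geq\eps$ apart, their Hausdorff limits in $K_k$ are again $\geq \eps$ apart, hence distinct; this lets me match, for all large $n$, each piece of $\mathcal{G}_n$ meeting $K_k$ to one of $p_k$ limit pieces unambiguously, and the matching is consistent across levels $k \leq k'$. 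Each resulting piece trajectory $(F_n)_n$ (a choice of $F_n \in \mathcal{G}_n$ tracked through the matching) then converges on every ball, so by remark \ref{rem:generalizedH} it has a set-limit $C := \lim_n F_n$; this limit is already determined by the convergence on all balls, so stability of $\mathcal{F}$ forces $C \in \mathcal{F}$. Let $\{C_j\}_j$ be the countable collection of all such limits.

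It remains to check that $\{C_j\}_j$ witnesses $B_\infty \in \mathcal{F}_\eps$. Each $C_j$ is connected: writing $C_j \cap K_{k'} = \lim_n (F_n \cap K_{k'})$ and using both the hypothesis that $F_n \cap K_{k'}$ is connected and the fact that a Hausdorff limit of connected compacta in a compact space is connected, each $C_j \cap K_{k'}$ is connected, and $C_j$ is the nested increasing union of these, hence connected. The pieces are pairwise $\eps$-separated: distinct trajectories correspond to distinct elements of $\mathcal{G}_n$ for all large $n$, whose mutual distance is $\geq \eps$, and this inequality passes to the Hausdorff limits. Finally $B_\infty = \cup_j C_j$: on each ball, $B_n \cap K_k$ is the union of the finitely many sets $F_n \cap K_k$ over pieces meeting $K_k$, and a finite union commutes with Hausdorff limits, so $B_\infty \cap K_k = \cup_j (C_j \cap K_k)$; letting $k \to \infty$ gives the claim. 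Hence $\mathcal{G}:=\{C_j\}_j \subset \mathcal{F}$ is $\eps$-separated with $\cup \mathcal{G} = B_\infty$, i.e. $\lim_n B_n = B_\infty \in \mathcal{F}_\eps$, proving stability.

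The main obstacle is the bookkeeping of the second paragraph: organizing the pieces into well-defined, level-consistent trajectories and verifying that distinct trajectories stay $\geq\eps$ apart, which is what simultaneously yields the $\eps$-separation of the $C_j$ and the identification $B_\infty = \cup_j C_j$. This rests entirely on the uniform finiteness bound $N_k$ and on $\eps$-separation being preserved under Hausdorff limits, while the hypothesis that $K_{k'} \cap F$ is connected is precisely what makes the limit pieces connected and hence admissible members of $\mathcal{F}$.
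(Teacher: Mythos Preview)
Your proposal is correct and follows a genuinely different route from the paper's. You build the decomposition of $B_\infty$ constructively: the uniform bound $\#\{F\in\mathcal{G}_n : F\cap K_k\neq\emptyset\}\leq \mathcal{M}(K_k,\eps/2)$ lets you diagonally extract so that on each ball the (finite, pairwise $\eps$-separated) configuration of pieces converges, then match pieces to limit pieces to form trajectories $(F_n)_n\subset\mathcal{F}$ whose limits $C_j$ lie in $\mathcal{F}$ by stability, are pairwise $\eps$-separated, and union to $B_\infty$. The paper instead takes $B_\infty=\lim_n B_n$ first and studies its connected components \emph{a posteriori}: assuming two components $F_1,F_2$ of $B_\infty$ have $\underline d(F_1,F_2)<\eps$, it restricts to a large ball $K$ where $B_n\cap K\to B_\infty\cap K$ in Hausdorff, produces $y_1,y_2\in B_n\cap K$ with $d(y_1,y_2)<\eps$ lying in distinct connected components of $B_n\cap K$, and then invokes the hypothesis that each $F\cap K$ is connected to identify those components with distinct elements of $\mathcal{G}_n$, contradicting $B_n\in\mathcal{F}_\eps$. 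Your route makes the membership $C_j\in\mathcal{F}$ explicit --- the paper dispatches this with ``it is then easily checked that $F_1,F_2\in\mathcal{F}$'', which in effect requires exactly your tracking argument --- at the cost of the trajectory bookkeeping you flag. Conversely, the paper's contradiction argument for the $\eps$-separation avoids that bookkeeping but leans essentially on the connectedness-of-restrictions hypothesis, whereas in your argument that hypothesis appears only redundantly: once stability places $C_j$ in the class $\mathcal{F}$ of connected sets, your separate proof that $C_j$ is connected is not needed.
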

\begin{proof}
Set $(B_n)_{n \geq 1} \subset \mathcal{F}_\eps$. Remark \ref{rem:totalborne} states that there exists $B_\infty$ such that a subsequence of $(B_n)_{n\geq 1}$ converges to $B_\infty$ for the generalized Hausdorff convergence. Suppose that there exist two connected components $F_1, F_2 \subset B_\infty$ such that
\[
\underline{d}(F_1,F_2):=\inf \{d(x,y); x \in F_1, y \in F_2 \}<\eps.
\]
Then, the hypothesis on $E$ implies that for any ball $K$ for which $K\cap F_1 \neq \emptyset$ and $K\cap F_2 \neq \emptyset$, there exists $x_1 \in F_1\cap K$ and $x_2 \in F_2 \cap K$ such that $\underline{d}(F_1\cap K,F_2\cap K) = d(x,y)$.
One can check that if $x$ and $y$ belong to the interior of $K$ (which we can assume without loss of generality), it holds
\[
d_H(B_n\cap K, B_\infty \cap K) \rightarrow 0.
\]
Taking then $K\in \{K_k\}_{k\geq 1}$ large enough so that
\[
\underline{d}(F_1\cap K,F_2 \cap K)=\eta<\eps.
\]
and setting
\[
\delta=\underline{d}(K\cap F_1,K\cap B_\infty \setminus F_1)\wedge\underline{d}(K\cap F_1,K\cap B_\infty \setminus F_1)>0,
\]
yields for all $n$ large enough
\begin{align}
F_1\cap K \subset B_n^{((\eps-\eta)\wedge\delta)/3} \cap K &\text{ and } F_2 \cap K \subset B_n^{((\eps-\eta)\wedge \delta)/3},\label{eq:contr}\\
B_n \cap K \subset & B_\infty^{\delta/2} \cap K.\label{eq:connected}
\end{align}
(\ref{eq:contr}) shows then that there exist $y_1,y_2 \in B_n\cap K$ such that $d(x_1,y_1)\leq ((\eps-\eta)\wedge \delta)/3$ and $d(x_2,y_2)\leq ((\eps-\eta)\wedge\delta)/3$ and so
\begin{align}
d(y_1,y_2) & \leq d(y_1,x_1) + d(x_1,x_2) + d(x_2,y_2)\nonumber\\
&\leq 2(\eps-\eta)/3 + \eta\nonumber\\
&\leq 2\eps/3 + \eta /3 < \eps.\label{eq:concl}
\end{align}
And (\ref{eq:connected}) shows that $B_n \cap K \cap F_1^{\delta/2} \subset F_1^{\delta/4} \cap K$ and $B_n \cap K \cap F_2^{\delta/2} \subset F_2^{\delta/4} \cap K$, since $K\cap B_\infty^{\delta/4} \cap F_i^{\delta/2}\subset K \cap F_i^{\delta/4}$ for $i=1,2$ by definition of $\delta$.
It follows that $y_1 \in B_n \cap K \cap F_1^{\delta/2}$ and $y_2 \in B_n \cap K \cap F_2^{\delta/2}$ are included in two distinct connected components $B_n\cap K$, which contradicts (\ref{eq:concl}) and hypothesis that $B_n\in \mathcal{F}_\eps$.

It is then easily checked that $F_1,F_2 \in \mathcal{F}$.
\end{proof}

\section{Conclusion}

We have thus defined a new way to localize mass of measure that is consistent for empirical measures, in Hausdorff metric. It is rather intuitive and applies to any Polish space, including infinite dimensional spaces. It thus provides an analogy to level sets in theses spaces.
The major drawbacks of our methods lie in the computability of the size function for rich classes $\mathcal{F}$ and the lack of rate of convergence for now.

\bibliography{mass_loc}

\begin{thebibliography}{CAGM97}

\bibitem[CAGM97]{kmeans}
J.~A. Cuesta-Albertos, A.~Gordaliza, and C.~Matran.
\newblock Trimmed k-means: An attempt to robustify quantizers.
\newblock {\em The Annals of Statistics}, 25(2):pp. 553--576, 1997.

\bibitem[Har87]{Hartigan1987}
J.~A. Hartigan.
\newblock Estimation of a convex density contour in two dimensions.
\newblock {\em Journal of the American Statistical Association},
  82(397):267--270, 1987.

\bibitem[KT61]{kolmogorov1961}
A.~N. Kolmogorov and V.~M. Tihomirov.
\newblock {\em E-Entropy and E-Capacity of Sets in Functional Spaces}.
\newblock American Mathematical Society, 1961.

\bibitem[Nol91]{Nolan1991}
D~Nolan.
\newblock The excess-mass ellipsoid.
\newblock {\em Journal of Multivariate Analysis}, 39(2):348 -- 371, 1991.

\bibitem[Nol92]{Nolan92}
D.~Nolan.
\newblock Asymptotics for multivariate trimming.
\newblock {\em Stochastic Processes and their Applications}, 42(1):157 -- 169,
  1992.

\bibitem[Pol97]{Polonik1997}
Wolfgang Polonik.
\newblock Minimum volume sets and generalized quantile processes.
\newblock {\em Stochastic Processes and their Applications}, 69(1):1 -- 24,
  1997.

\bibitem[SRT88]{saint1988}
X.~Saint~Raymond and C.~Tricot.
\newblock Packing regularity of sets in n-space.
\newblock In {\em Proc. Cambridge Philos. Soc}, volume 103, pages 133--145.
  Cambridge Univ Press, 1988.

\end{thebibliography}
%\newpage
%\theendnotes

\end{document}